\newcommand{\erase}[1]{}
\newtheorem{theorem}{Theorem}[section]
\newtheorem{lemma}[theorem]{Lemma}
\newtheorem{proposition}[theorem]{Proposition}
\newtheorem{corollary}[theorem]{Corollary}
\newtheorem{_algorithm}[theorem]{Algorithm}
\newenvironment{algorithm}{\begin{_algorithm}\rm}{\hfill \rule{3pt}{6pt}\end{_algorithm}}
\newtheorem{_procedure}[theorem]{Procedure}
\newtheorem{_definition}[theorem]{Definition}
\newenvironment{definition}{\begin{_definition}\rm}{\end{_definition}}
\newtheorem{_remark}[theorem]{\it Remark}
\newenvironment{remark}{\begin{_remark}\rm}{\end{_remark}}
\newtheorem{_example}[theorem]{Example}
\numberwithin{equation}{section}
\numberwithin{table}{section}
\numberwithin{figure}{section}
\newcommand{\C}{\mathord{\mathbb C}}
\newcommand{\F}{\mathord{\mathbb F}}
\renewcommand{\P}{\mathord{\mathbb  P}}
\newcommand{\Q}{\mathord{\mathbb  Q}}
\newcommand{\R}{\mathord{\mathbb R}}
\newcommand{\Z}{\mathord{\mathbb Z}}
\newcommand{\FFF}{\mathord{\mathcal F}}
\newcommand{\HHH}{\mathord{\mathcal H}}
\newcommand{\LLL}{\mathord{\mathcal L}}
\newcommand{\NNN}{\mathord{\mathcal N}}
\newcommand{\OOO}{\mathord{\mathcal O}}
\newcommand{\PPP}{\mathord{\mathcal P}}
\newcommand{\XXX}{\mathord{\mathcal X}}
\newcommand{\inj}{\hookrightarrow}
\newcommand{\isom}{\mathbin{\,\raise -.6pt\rlap{$\to$}\raise 3.5pt \hbox{\hskip .3pt$\mathord{\sim}$}\,\;}}
\newcommand{\set}[2]{\{\; {#1} \; \mid \; {#2} \;  \}}
\newcommand{\shortset}[2]{\{ {#1} \,|\, {#2}   \}}
\newcommand{\tensor}{\otimes}
\newcommand{\sprime}{\sp\prime}
\newcommand{\spar}[1]{\sp{(#1)}}
\newcommand{\spprime}{\sp{\prime\prime}}
\newcommand{\sptimes}{\sp{\times}}
\newcommand{\sperp}{\sp{\perp}}
\newcommand{\dual}{\sp{\vee}}
\newcommand{\inv}{\sp{-1}}
\newcommand{\GL}{\mathord{\mathrm {GL}}}
\newcommand{\PGL}{\mathord{\mathrm {PGL}}}
\newcommand{\OG}{\mathord{\mathrm {O}}}
\newcommand{\Image}{\operatorname{\mathrm {Im}}\nolimits}
\newcommand{\Aut}{\operatorname{\mathrm {Aut}}\nolimits}
\newcommand{\Gal}{\operatorname{\mathrm {Gal}}\nolimits}
\newcommand{\pr}{\mathord{\mathrm {pr}}}
\newcommand{\rank}{\operatorname{\mathrm {rank}}\nolimits}
\newcommand{\disc}{\operatorname{\mathrm {disc}}\nolimits}
\newcommand{\mystruth}[1]{\phantom{\hbox{\vrule height #1}}}
\newcommand{\mystruthd}[2]{\phantom{\hbox{\vrule  height #1 depth #2}}}
\newcommand{\Xfs}{X_{56}}
\newcommand{\XFer}{X_{48}}
\newcommand{\hfs}{h_{56}}
\newcommand{\hFer}{h_{48}}
\newcommand{\PP}{\mathord{\bf P}}
\newcommand{\SX}{S_{X}}
\newcommand{\SXp}{S_{X_{56}(P)}}
\newcommand{\TX}{T_{X}}
\newcommand{\tilGFer}{\tilde{G}_{48}}
\newcommand{\tilGfs}{\tilde{G}_{56}}
\newcommand{\intf}[1]{\langle #1\rangle}
\newcommand{\Lines}{\mathord{\FFF}}
\newcommand{\XXXfs}{{\XXX}_{56}}
\newcommand{\XfsP}{{X}_{56}{(P)}}
\newcommand{\LC}{\mathrm{LC}}
\newcommand{\gcds}{\mathrm{gcds}}
\newcommand{\ppar}[1]{(#1)}
\newcommand{\modP}{\hskip -.5pt \bmod{{\hskip -.5pt}P}}
\newcommand{\GramSX}{{\rm G}_{\SX}}
\newcommand{\divbyH}[2]{\overline{#1}{\phantom{\mystruth{8pt}}}^{#2}}
\begin{document}
\title[A smooth quartic surface containing $56$ lines]{On a smooth quartic surface containing $56$ lines
which is  isomorphic as a $K3 $ surface\\ to the Fermat quartic}

\author{Ichiro Shimada}
\address{Department of Mathematics, 
Graduate School of Science, 
Hiroshima University,
1-3-1 Kagamiyama, 
Higashi-Hiroshima, 
739-8526 JAPAN}
\email{shimada@math.sci.hiroshima-u.ac.jp}

\author{Tetsuji Shioda}
\address{Department of Mathematics,  
Rikkyo University,
3-34-1 Nishi-Ikebukuro, Toshima-ku,  
Tokyo, 
171-8501 JAPAN}
\email{shioda@rikkyo.ac.jp}

\thanks{Partially supported by JSPS Grants-in-Aid for Scientific Research (C) No.~25400042 and (S) No.~15H05738}

\begin{abstract}
We give a defining equation of a complex smooth quartic surface containing $56$ lines, 
and investigate its reductions to positive characteristics.
This surface is isomorphic to the complex Fermat quartic surface,
which contains only $48$ lines.
We give the isomorphism explicitly.
\end{abstract}

\maketitle

\section{Introduction}
The complex Fermat quartic surface 
$$
x_1^4+x_2^4+x_3^4+x_4^4=0
$$
is a very interesting surface,
because it lies at the intersection point of the two important classes of algebraic varieties;
$K3$ surfaces and Fermat varieties.
In this paper,
we show that the complex $K3$ surface  underlying the Fermat quartic surface
has  another smooth quartic surface $\Xfs\subset \P^3$
as a projective model.
While the Fermat quartic surface contains only $48$ lines,
our new quartic surface $\Xfs$ contains $56$ lines,
and hence  these two surfaces are  not projectively isomorphic.
We present an explicit  defining equation of  $\Xfs$,
and describe the isomorphism between these two surfaces.
It turns out that the isomorphism $\XFer\isom \Xfs$, the lines on $\XFer$ and $\Xfs$,
and the automorphisms of  $\XFer$ and $\Xfs$ are all defined over the $8$th cyclotomic field $\Q(\zeta)$,
where
$$
\zeta:=\exp(2\pi \sqrt{-1}/8).
$$
We then study the reductions of $\Xfs$ at primes of $\Z[\zeta]$.
\par
In the following,
we denote  the complex Fermat quartic surface by $\XFer$,
and the complex $K3$ surface  underlying $\XFer$ by $X$.
\par 
The existence of  a complex smooth quartic surface containing $56$ lines that is isomorphic to  $\XFer$ 
has been implicitly shown in the paper~\cite{DIS} by Degtyarev, Itenberg and Sert\"oz.
This paper is one of the several works~\cite{DIS, MR3396099, arXiv151201358} on the number of lines 
lying on a smooth quartic surface
that have been done
after the seminal work~\cite{MR3343894}, in which 
Rams and Sch\"utt revised and corrected B.~Segre's classical work~\cite{MR0010431}.
 Degtyarev, Itenberg and Sert\"oz~\cite{DIS} proved the  following theorem.
\begin{theorem}
The number of lines 
lying on a complex  smooth   quartic surface
is either in $\{64, 60, 56, 54\}$  or $\le 52$.
\end{theorem}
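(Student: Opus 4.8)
The plan is to recast the problem entirely in terms of the N\'eron--Severi lattice. A smooth quartic $X\subset\P^3$ is a $K3$ surface, its hyperplane class $h$ satisfies $h^2=4$, and a line $\ell\subset X$ is exactly a class with $\ell^2=-2$ and $\ell\cdot h=1$ that is represented by an irreducible curve. Two distinct lines meet in a point or are disjoint, so $\ell_i\cdot\ell_j\in\{0,1\}$, and the whole configuration is encoded by the \emph{Fano graph} whose vertices are the lines and whose edges record incidence. The sublattice $S$ spanned by $h$ together with all the lines is hyperbolic of signature $(1,\rank S-1)$ and embeds into $\mathrm{NS}(X)$, which over $\C$ has rank at most $20$; hence $\rank S\le 20$. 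The goal is thus a finiteness-plus-classification statement about hyperbolic lattices of bounded rank generated by $h$ and a set of degree-one $(-2)$-vectors, subject to the geometric incidence constraints.

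The first concrete tool I would use is a pencil dichotomy. For a fixed line $\ell$, set $f:=h-\ell$; then $f^2=h^2-2\,h\cdot\ell+\ell^2=0$ and $f\cdot h=3$, so $f$ is a genus-one pencil on $X$, namely the pencil of residual cubics cut on $X$ by the planes through $\ell$. For any other line $\ell'$ one computes $\ell'\cdot f=1-\ell'\cdot\ell$, so $\ell'$ meets $\ell$ iff $\ell'\cdot f=0$, i.e.\ iff $\ell'$ is a component of a fibre, while $\ell'$ disjoint from $\ell$ satisfies $\ell'\cdot f=1$, i.e.\ is a section. The lines through $\ell$ are therefore vertical $(-2)$-curves of this elliptic pencil, and the classification of singular fibres together with the Euler-number identity $\sum_v e(\text{fibre}_v)=24$ bounds their number. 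This is precisely Segre's valency bound as corrected by Rams--Sch\"utt: in characteristic $0$ each line meets at most $18$ others. Dually, the lines disjoint from $\ell$ are sections of $f$, so their number is controlled by the Mordell--Weil group and its torsion.

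With the local bound in hand, the strategy is to combine the incidence data of \emph{all} the pencils $f=h-\ell$ simultaneously and to realize the resulting abstract configuration as a primitive sublattice of the $K3$ lattice $U^{\oplus 3}\oplus E_8^{\oplus 2}$ of signature $(3,19)$, using Nikulin's theory of discriminant forms to decide which configurations actually embed. Large line counts force the Fano graph to be highly regular, and one shows that a configuration with $N$ lines survives only for $N\in\{64,60,56,54\}$ or $N\le 52$, the extremal $N=64$ case being the Schur quartic. The hard part is the exhaustive elimination of the forbidden values $53$, $55$, $57$--$59$, and $61$--$63$: one must rule out every hyperbolic lattice of rank $\le 20$ carrying that many admissible degree-one $(-2)$-vectors whose graph satisfies the valency bound, while simultaneously checking the $\disc$ and embedding conditions. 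This combinatorial--arithmetic case analysis is delicate and in practice computer-assisted, and it is where essentially all the effort of~\cite{DIS} lies.
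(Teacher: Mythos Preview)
The paper does not prove this theorem at all: it is quoted verbatim as a result of Degtyarev, Itenberg and Sert\"oz~\cite{DIS}, with no argument given. There is therefore no ``paper's own proof'' to compare your proposal against.

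That said, your outline is a fair high-level summary of the strategy of~\cite{DIS}: reduce to lattice data, use the residual-cubic pencil $f=h-\ell$ to get the valency bound $\le 18$ \`a la Rams--Sch\"utt, encode the configuration in a Fano graph, and then run a (computer-assisted) enumeration of admissible graphs together with Nikulin-type embedding tests into the $K3$ lattice. As written, however, it is a plan rather than a proof. The substantive content of the theorem lies precisely in the step you label ``the hard part'': the exhaustive elimination of $N=53,55,57,58,59,61,62,63$. Nothing in your text actually carries out or even structures that elimination --- you have not specified how the candidate graphs are generated, which invariants cut the search down to something finite, or why the surviving lattices genuinely fail to embed primitively in $U^{\oplus 3}\oplus E_8^{\oplus 2}$. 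Without that, the argument has the same status as the paper's own treatment: a citation of~\cite{DIS}.
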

The maximum number  $64$ of lines lying on a complex smooth quartic surface is attained by the Schur quartic.
Sch\"utt~\cite{MScomm} has discovered the defining equations of  complex smooth   quartic surfaces
containing $60$ lines.
On the other hand,  in~\cite{DIS},
the transcendental lattices of  complex  smooth  quartic surfaces containing $56$ lines are calculated.
One of these surfaces, which we denote by $\Xfs$,  has the oriented transcendental lattice isomorphic to that of $\XFer$,  
and hence they are isomorphic over the complex number field by the result of~\cite{MR0441982}
 on the classification of complex $K3$ surfaces with Picard number $20$.
 However,
 the defining equation of $\Xfs$ 
and the description of the isomorphism $\XFer\isom \Xfs$ 
have been unknown.
\par
Our main results are as follows.
Let $\PP^3$ and $\P^3$ be the projective spaces with homogeneous coordinates $(x_1: x_2: x_3: x_4)$
and  $(y_1: y_2: y_3: y_4)$, respectively.
Let $\SX$ denote the N\'eron-Severi lattice of $\XFer$,
and let $\hFer\in \SX$ be the class of a hyperplane section of $\XFer\subset \PP^3$.
\begin{theorem}\label{thm:main3}
If $h\in \SX$ is a very ample   class  such that $\intf{h, h}=4$ and $\intf{h, \hFer}=6$,
then the quartic surface  model $X_h$ of $X$ corresponding to $h$ contains exactly $56$ lines.
%There exist exactly $384$ very ample   classes $h\in \SX$  satisfying  $\intf{h, h}=4$ and $\intf{h, \hFer}=6$.
The number of  very ample   classes $h\in \SX$  satisfying  $\intf{h, h}=4$ and $\intf{h, \hFer}=6$
is $384$.
\end{theorem}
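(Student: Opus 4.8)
The plan is to reduce both assertions to finite computations in the lattice $\SX$, which is an even hyperbolic lattice of rank $20$ and signature $(1,19)$ whose Gram matrix, together with the coordinate vector of $\hFer$ (with $\intf{\hFer,\hFer}=4$), is explicitly known. The essential finiteness input is that $\hFer$ is timelike: since $\intf{\hFer,\hFer}=4>0$, the orthogonal complement of $\hFer$ in $\SX\otimes\R$ is negative definite, so the affine hyperplane $\{h:\intf{h,\hFer}=6\}$ meets the quadric $\{\intf{h,h}=4\}$ in a compact ellipsoid. Hence
\[
\mathcal{H}=\set{h\in\SX}{\intf{h,h}=4,\ \intf{h,\hFer}=6}
\]
is a finite set, and I would first enumerate it by a standard short-vector search (Fincke--Pohst after an LLL reduction).

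Next I would decide, for each $h\in\mathcal{H}$, ampleness and then very ampleness. A class $h$ in the positive cone is ample exactly when it lies in the same chamber as the very ample class $\hFer$, i.e. when no $(-2)$-wall separates $h$ from $\hFer$; any separating $(-2)$-vector $r$ satisfies $\intf{r,\hFer}>0\ge\intf{r,h}$ and hence has bounded norm given the bounds on $\intf{r,h}$ and $\intf{r,\hFer}$, so only finitely many $r$ must be examined and ampleness is decidable. For the selected ample classes I would apply Saint-Donat's criterion for projective models of $K3$ surfaces: an ample $h$ with $\intf{h,h}=4$ fails to be very ample precisely when $\phi_h$ is two-to-one onto a quadric or has a base point, and each such failure is witnessed by a class $e$ with $\intf{e,e}=0$ and $\intf{e,h}\in\{1,2\}$ (a genus-one pencil of too small degree). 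Because $h$ is timelike, $\set{e\in\SX}{\intf{e,e}=0,\ \intf{e,h}=c}$ is again a compact ellipsoidal slice of the null cone for each fixed $c>0$, so this is a finite check. Carrying out the enumeration yields exactly $384$ very ample classes, which proves the second assertion.

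For the first assertion I would use the dictionary between lines on $X_h$ and lattice vectors. A line on the smooth quartic $X_h\subset\P^3$ is a smooth rational curve of degree one, so its class $\ell$ satisfies $\intf{\ell,\ell}=-2$ and $\intf{\ell,h}=1$; conversely, since $h$ is ample an effective class $\ell$ with $\intf{\ell,h}=1$ cannot split into two effective summands of positive $h$-degree and so is irreducible, while very ampleness forces the corresponding curve to embed as a line. Thus the number of lines on $X_h$ equals $\#\set{\ell\in\SX}{\intf{\ell,\ell}=-2,\ \intf{\ell,h}=1}$ (for such a $(-2)$-class exactly one of $\pm\ell$ is effective, and $\intf{\ell,h}=1>0$ selects it). This set is finite by the same ellipsoid argument, and I would compute its cardinality. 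To obtain the value $56$ uniformly over all $384$ classes, I would exploit that the subgroup of $\OG(\SX)$ fixing $\hFer$ and preserving the ample cone, which is induced by $\Aut(\XFer)$, permutes $\mathcal{H}$ while preserving the line-counting function, so it suffices to evaluate the count on one representative of each orbit.

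The step I expect to be the main obstacle is the very-ampleness analysis: translating Saint-Donat's conditions correctly into the lattice language and certifying, for every candidate class, that no exceptional genus-one pencil or $(-2)$-configuration occurs, so that $X_h$ is a genuinely embedded smooth quartic rather than a hyperelliptic or singular image. Equally, the whole scheme rests on having the $(-2)$-curves of $X$, equivalently the ample cone, under explicit control; once that and the Gram matrix of $\SX$ are fixed, the remaining enumerations are mechanical, and the only residual subtlety is ensuring that the finite search radii are provably large enough to capture all relevant vectors.
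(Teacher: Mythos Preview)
Your proposal is correct and follows essentially the same approach as the paper: the paper's Theorem~4.1 gives precisely your lattice criteria (positive cone, no separating $(-2)$-walls, Saint-Donat's isotropic-vector tests for $\intf{e,h}\in\{1,2\}$, and no $(-2)$-vectors orthogonal to $h$), and the paper then enumerates $\HHH_6$ with $|\HHH_6|=48264$, decomposes it into orbits under $G_{48}=\Aut(\XFer)$, and finds that exactly $384$ classes (in two $G_{48}$-orbits, merged into one under the larger stabilizer $\tilGFer$) survive all tests, each with $56$ vectors in $\{r:\intf{r,r}=-2,\ \intf{r,h}=1\}$. The only organizational difference is that you fold the paper's conditions (b) and (e) into a single ampleness check, whereas the paper separates nefness from smoothness of the image; this is harmless.
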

Our quartic surface  $\Xfs\subset \P^3$ corresponds to one of the $384$ very ample classes in Theorem~\ref{thm:main3}.
For a prime $P$ of $\Z[\zeta]$,
let $\kappa_P$ denote the residue field at $P$,  and $\bar{\kappa}_P$ an algebraic closure of $\kappa_P$.
\begin{theorem}\label{thm:main}
{\rm (1)}
We put
$A := -1-2\zeta-2\zeta^3$,
$B:=3+A$,
and
\begin{eqnarray*}
\Psi&:=&y_1^3 y_2+y_1 y_2^3+y_3^3 y_4+y_3 y_4^3\\
&&\phantom{a} +(y_1 y_4+y_2 y_3) (A (y_1 y_3+y_2 y_4)+B (y_1 y_2-y_3 y_4)).
\end{eqnarray*}
Let $\Xfs$ denote the surface in $\P^3$ defined over $\Q(\zeta)$  by the equation $\Psi=0$.
Then the complex  surface  $\Xfs\tensor \C$  
is smooth,  and contains exactly $56$ lines,
each of which is defined over $\Q(\zeta)$.
The projective automorphism group of $\Xfs\tensor \C$ is of order $64$.
Moreover, 
if $P$ is a prime of $\Z[\zeta]$ over a prime integer $p>3$,
then the surface  $\Xfs\tensor \bar{\kappa}_P$   is also smooth, contains exactly $56$ lines, each of which is defined over 
the finite field $\kappa_P$.
\par
{\rm (2)}
There exists  an isomorphism $\XFer\isom \Xfs$ defined over $\Q(\zeta)$
such that the class  $\hfs\in \SX$ of  the pull-back of a hyperplane section of $\Xfs$ 
satisfies  $\intf{\hFer, \hfs}=6$.
This isomorphism induces 
an isomorphism from $\XFer\tensor \bar{\kappa}_P$ to $\Xfs\tensor \bar{\kappa}_P$, 
if $P$ is a prime of $\Z[\zeta]$ over a prime integer $p>3$.
\end{theorem}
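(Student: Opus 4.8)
The plan is to derive the explicit geometry of $\Xfs$ from the abstract existence supplied by Theorem~\ref{thm:main3}, rendering every ingredient explicit over $\Q(\zeta)$ so that reduction modulo $P$ is controlled by a few discriminants. I would begin with Part~(2), since the isomorphism drives everything else. Fix a very ample class $\hfs\in\SX$ with $\intf{\hfs,\hfs}=4$ and $\intf{\hFer,\hfs}=6$ as furnished by Theorem~\ref{thm:main3}; by Riemann--Roch on the $K3$ surface $X$ one has $h^0(\hfs)=\intf{\hfs,\hfs}/2+2=4$, so $|\hfs|$ defines the quartic model $\XFer\to\Xfs$. To make this map explicit I would write $\hfs=2\,\hFer-\ell_1-\ell_2$ for a pair of skew lines $\ell_1,\ell_2\subset\XFer$ (whence $\intf{\hFer,\hfs}=6$ and $\intf{\hfs,\hfs}=4$ hold automatically), both defined over $\Q(\zeta)$; then a $\Q(\zeta)$-basis $F_1,\dots,F_4$ of the $4$-dimensional space of quadrics in $x_1,\dots,x_4$ vanishing on $\ell_1\cup\ell_2$ represents $H^0(X,\hfs)$, and the isomorphism is $(x_1:\cdots:x_4)\mapsto(F_1:\cdots:F_4)$. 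Substituting the $F_i$ into $\Psi$ must yield a form divisible by $x_1^4+\cdots+x_4^4$; as the image is then a quartic surface contained in $\{\Psi=0\}$ and $\Psi$ is irreducible of degree $4$, this simultaneously identifies $\Psi$ as the defining equation of $\Xfs$ and shows the map lands on it.

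For Part~(1), I would check smoothness directly: the discriminant of the quartic $\Psi$---equivalently, the assertion that the Jacobian ideal $(\partial\Psi/\partial y_1,\dots,\partial\Psi/\partial y_4)$ has no common projective zero---is a nonzero element of $\Z[\zeta]$, a single computation that also serves in positive characteristic. The count of exactly $56$ lines over $\C$ follows from Theorem~\ref{thm:main3} once Part~(2) identifies $\Xfs$ with $X_{\hfs}$; to secure $\Q(\zeta)$-rationality I would additionally run the direct search, parametrizing a line as $y_3=ay_1+by_2,\ y_4=cy_1+dy_2$ (treating separately the finitely many lines in coordinate hyperplanes), substituting into $\Psi$, and requiring the resulting binary quartic in $(y_1,y_2)$ to vanish identically. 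The five coefficient equations cut out a zero-dimensional scheme whose $56$ points I would exhibit with coordinates in $\Q(\zeta)$. The projective automorphism group is obtained by solving $g^{*}\Psi=\lambda\,\Psi$ for $g\in\PGL_4$; since every such $g$ permutes the $56$ lines, it is determined by its action on this finite configuration, and the resulting finite search returns a group of order $64$.

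The positive-characteristic statements follow by tracking a handful of discriminants. Every object above---the forms $F_i$, the $56$ lines, and generators of $\Aut$---has coordinates in $\Z[\zeta]$, so it suffices to show that the finitely many bad primes, where smoothness fails, two of the lines coincide, or $(F_i)$ ceases to be an embedding of $\XFer$, all lie over $2$ or $3$. Concretely, I would verify that the discriminant of $\Psi$, the resultants separating the $56$ lines, and the resultants certifying that $(F_1:\cdots:F_4)$ restricts to an embedding of $\XFer$ are each of the form $\pm2^{a}3^{b}\cdot u$ with $u$ a unit of $\Z[\zeta]$. Then for any prime $P$ over $p>3$ these are units in $\kappa_P$, so $\Xfs\tensor\barFp$ stays smooth with $56$ distinct lines---each defined over $\kappa_P$, since its coordinates reduce from $\Q(\zeta)$---and the reduced map remains an isomorphism $\XFer\tensor\barFp\isom\Xfs\tensor\barFp$.

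The main obstacle is the explicit realization of $|\hfs|$ in Part~(2): converting the lattice class $\hfs$ into four concrete generating forms $F_i$ over $\Q(\zeta)$. This requires pinning down $\hfs$ as the combination $2\,\hFer-\ell_1-\ell_2$ with the two skew Fermat lines chosen sharply enough to single out the quadrics cutting $|\hfs|$, followed by the substitution verifying divisibility by the Fermat polynomial; keeping every coefficient inside $\Z[\zeta]$ and certifying that only $2$ and $3$ are bad is where the real labor lies.
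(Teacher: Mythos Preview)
Your central step fails: the class $2h_{48}-\ell_1-\ell_2$ is \emph{never} very ample, for any pair of skew Fermat lines. Take $e:=h_{48}-\ell_1$. Then $\intf{e,e}=4-2-2=0$ and
\[
\intf{e,\,2h_{48}-\ell_1-\ell_2}=2\cdot 4-1-1-2-2+0=2,
\]
so by Saint-Donat's criterion the linear system $|2h_{48}-\ell_1-\ell_2|$ is hyperelliptic: it maps $X$ two-to-one onto a quadric, not isomorphically onto a quartic. (Geometrically, the pencil $|h_{48}-\ell_1|$ of residual plane cubics through $\ell_1$ is a genus-one fibration of degree $2$ against your class.) In the paper's enumeration of the $48264$ classes in $\HHH_6$, the $792$ classes of this shape sit among the non-very-ample ones; the genuine $\Xfs$-polarizations are realized instead as
\[
h_{56}=3h_{48}-[\ell_1]-[\ell_2]-[m_1]-\cdots-[m_4]
\]
for a carefully chosen configuration of six lines, and the explicit map $\XFer\isom\Xfs$ is given by \emph{cubic} forms $f_1,\dots,f_4$ vanishing along those six lines, not by quadrics. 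So your construction of the isomorphism, and hence your identification of $\Psi$, does not get off the ground.

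There is a second, independent gap in the positive-characteristic part. Verifying that the $56$ lines reduce to $56$ distinct lines on $\Xfs\tensor\bar\kappa_P$ only gives a lower bound; it does not exclude \emph{new} lines appearing after reduction. This is not a hypothetical worry: at one of the primes over $3$ the reduction acquires $112$ lines. The paper handles this by a lattice argument inside $S_{\XfsP}$: any extra line $\nu$ would have $\pr_S([\nu])$ lying in an auxiliary finite set $\FFF'_{56}\subset \SX\dual$ computed via Algorithm~\ref{algo:AffES}, forcing $\nu$ to be a common transversal to an explicit set of reduced lines; one then checks by Gr\"obner bases that no such transversal exists for $p>3$. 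Your resultant bookkeeping does not address this upper bound at all.
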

\begin{remark}
Since $A=-1-2\sqrt{-2}$, the surface $\Xfs$ is in fact defined over $\Q(\sqrt{-2})$.
\end{remark}
The precise description and 
a geometric characterization  of the class $\hfs\in S_X$ are given in Section~\ref{sec:X56}.
An explicit description of the isomorphism $\XFer\isom \Xfs$   is given in Table~\ref{table:fs},
where the pull-back of the rational functions $y_i/y_j$ on $\Xfs$ are the rational functions $f_i/f_j$ on $\XFer$.
\par
In the study of reductions of $\Xfs$,
we have to 
calculate Gr\"obner bases of ideals in  polynomial rings with coefficients in $\Z[\zeta]$
over  the residue field $\kappa_P$ at infinitely many   primes $P$ of $\Z[\zeta]$.
A simple computational trick  for this task will be given in Section~\ref{sec:reduction}.
For the actual computation, we used GAP~\cite{gap}.
Computational data is available from the author's webpage~\cite{X56compdata}.
\par
The N\'eron-Severi lattices of the Fermat quartic surfaces in characteristic $0$ and in characteristic $3$
were studied by Mizukami and Inose in 1970's.
In particular, they proved that these N\'eron-Severi lattices are generated by the classes of lines.
This fact is crucial for our construction of $\Xfs$.
See~\cite{Mizukami} and Section 6.1 of~\cite{MR2653207}.
\par
Note that we have the following classical theorem due to Matsumura and Monsky~\cite[Theorem 2]{MR0168559}.
\begin{theorem}
If two smooth hypersurfaces of degree $d\ge 3$ in $\P^n$ with $n\ge 3$ are  isomorphic as abstract varieties but not projectively equivalent,
then we have $(d, n)=(4, 3)$. 
\end{theorem}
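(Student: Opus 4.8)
The plan is to recover the hyperplane class intrinsically from the abstract variety by means of the adjunction formula. Let $X, Y \subset \P^n$ be smooth hypersurfaces of degree $d$, write $H_X = \OOO_X(1)$ and $H_Y = \OOO_Y(1)$ for the hyperplane classes, and let $\phi\colon X \isom Y$ be an isomorphism of abstract varieties. By adjunction the canonical class is $K_X = (d-n-1)H_X$, and likewise $K_Y = (d-n-1)H_Y$. An abstract isomorphism carries canonical class to canonical class, so $\phi^* K_Y = K_X$, which gives
\[
(d-n-1)\,\phi^* H_Y = (d-n-1)\,H_X \quad\text{in } \operatorname{Pic}(X).
\]
Since $X$ is a smooth hypersurface of dimension $\ge 2$ it is simply connected by the Lefschetz hyperplane theorem and has $h^1(\OOO_X)=0$, so $\operatorname{Pic}(X)$ is torsion free. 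I would first dispose of the case $d\ne n+1$: here the integer $d-n-1$ is nonzero, and torsion freeness lets me cancel it to obtain $\phi^* H_Y = H_X$.

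When $d = n+1$ the canonical class is trivial and this cancellation collapses, so here I would instead invoke the Grothendieck--Lefschetz theorem. For $n \ge 4$ the variety $X$ has dimension $\ge 3$ and the restriction $\operatorname{Pic}(\P^n)\to\operatorname{Pic}(X)$ is an isomorphism, whence $\operatorname{Pic}(X)=\Z\,H_X$ with $H_X$ the ample generator; an isomorphism preserves ampleness and must send ample generator to ample generator, so again $\phi^* H_Y = H_X$. The single configuration escaping both arguments is $d=n+1$ together with $n=3$, that is $(d,n)=(4,3)$, where $X$ is a K3 surface: the canonical class vanishes and the Picard rank may be as large as $20$, so there is genuinely no intrinsic way to single out $H_X$. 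This is precisely the exceptional case, realized by the surfaces $\XFer$ and $\Xfs$ studied in this paper.

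Finally, once $\phi^* H_Y = H_X$ has been established (which happens in every case with $(d,n)\ne(4,3)$), I would argue that $\phi$ is the restriction of a projective transformation. Choosing an isomorphism $\phi^*\OOO_Y(1) \isom \OOO_X(1)$ induces a linear isomorphism $H^0(Y,\OOO_Y(1)) \isom H^0(X,\OOO_X(1))$ on global sections. Since $d\ge 2$, both hypersurfaces are linearly normal, so each of these spaces is naturally identified with the space $H^0(\P^n,\OOO(1))$ of linear forms; the resulting linear isomorphism is an element $g\in\PGL_{n+1}$ whose restriction to $X$ recovers $\phi$, so $g(X)=Y$ and the two hypersurfaces are projectively equivalent. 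Contrapositively, if they are isomorphic as abstract varieties but not projectively equivalent, then $(d,n)=(4,3)$.

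The main obstacle is the case $d=n+1$, where adjunction yields no information whatsoever; the entire weight of the argument rests on separating the two independent reasons the hyperplane class can fail to be intrinsic — a trivial canonical class, which forces $d=n+1$, and a Picard group larger than $\Z\,H_X$, which forces $n=3$ — and on the observation that these two obstructions overlap only at $(d,n)=(4,3)$.
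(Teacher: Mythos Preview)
The paper does not give a proof of this theorem; it is quoted as a classical result of Matsumura and Monsky~\cite[Theorem~2]{MR0168559} and used only as context. Your argument is correct and is essentially the standard one (and, as far as I know, the one in Matsumura--Monsky): recover the hyperplane class intrinsically from the canonical class via adjunction when $d\ne n+1$, and from the Grothendieck--Lefschetz isomorphism $\operatorname{Pic}(\P^n)\isom\operatorname{Pic}(X)$ when $\dim X\ge 3$, so that the only pair escaping both mechanisms is $(d,n)=(4,3)$; then linear normality upgrades $\phi^*\OOO_Y(1)\cong\OOO_X(1)$ to a projective equivalence.
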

\noindent
Recently, Oguiso~\cite{Ogui} informed us of 
his method of constructing 
pairs of complex smooth quartic surfaces that are isomorphic as $K3$ surfaces  but are not 
projectively isomorphic.
His result shows in particular  that the graph of the isomorphism $ \XFer\isom\Xfs$
is a complete intersection of $4$ hypersurfaces of bi-degree $(1, 1)$ in $\PP^3\times \P^3$.
%The explicit defining equation of the graph seems to be unknown.
\par
%
%See Remark~\ref{rem:det}.
\par
After the first version of this paper is submitted,
Degtyarev~\cite{newDeg} has proved that,
up to projective equivalence,  the $K3$ surface underlying $\XFer$ has exactly 
three smooth quartic surface models; $\XFer$, $\Xfs$, and its complex conjugate  $\overline{\Xfs}$.
\par
The plan of this paper is as follows.
In~Section~\ref{sec:prelim},
we review the theory of lattices, fix notation, 
and present two algorithms that are used throughout this paper.
In~Section~\ref{sec:Fermat},
we describe the N\'eron-Severi lattice $\SX$ of  $\XFer$ 
by means of the $48$ lines on it.
%We also review the work of B.~Segre~\cite{MR0011003} on the action of the projective automorphism group $\Aut(\XFer)$ on the set of pairs of lines.
In~Section~\ref{sec:X56},
we study  very ample line bundles on $\XFer$ that give rise to an isomorphism to $\Xfs$,
and show how to obtain the defining equation of $\Xfs$.
We also compute the projective automorphism group of $\Xfs$.
In~Section~\ref{sec:reduction},
we investigate the reductions of $\Xfs$ at primes of $\Z[\zeta]$.
\par
Thanks are due to Professors Alex Degtyarev, Keiji Oguiso, and Matthias Sch\"utt
for many discussions. 
We also thank the referee for many valuable comments.
\section{Preliminaries on lattices}\label{sec:prelim}
A \emph{lattice} is a free $\Z$-module $L$ of finite rank with a non-degenerate symmetric bilinear form $\intf{\phantom{\cdot}, \phantom{\cdot}}\colon L\times L\to \Z$.
The \emph{orthogonal group} $\OG(L)$ of $L$  acts on $L$ from the \emph{right}.
The \emph{dual lattice} $L\dual$ of $L$ is a submodule of $L\tensor \Q$ consisting of vectors $v\in L\tensor \Q$ such that
$\intf{v, x}\in \Z$ holds for any $x\in L$.
%Note that $L\dual$ contains $L$ as a submodule of finite index.
The \emph{discriminant group}  $\disc(L)$ of $L$ is defined to be $L\dual/L$.
A lattice $L$  is \emph{unimodular} if  $\disc(L)$ is trivial.
%\par
A lattice $L$ is \emph{even} if $\intf{v, v}\in 2\Z$ holds for any $v\in L$.
Let $L$ be an even lattice.
The  $\Q$-valued  symmetric bilinear form on $L\dual$ that extends $\intf{\phantom{\cdot}, \phantom{\cdot}}\colon L\times L\to \Z$
defines  a finite quadratic form
$q_L\colon \disc (L) \to \Q/2\Z$,
which is called the \emph{discriminant form} of $L$.
Let $\OG(q_L)$ denote the automorphism group of the finite quadratic form $q_L$,
which acts on $\disc(L)$ from the right.
Then we have a natural homomorphism 
$$
\eta_L\colon \OG(L)\to \OG(q_L).
$$
See~\cite{MR525944} for applications of  the theory of discriminant forms.
\par
A lattice $L$ of rank $n>1$ is \emph{hyperbolic} if the signature of the real quadratic space $L\tensor \R$ is $(1, n-1)$.
We have the following algorithms. See~\cite{MR3166075} for details. 
\begin{algorithm}\label{algo:AffES}
Let $M$ be a free $\Z$-module  of finite rank $n>1$ with a $\Q$-valued non-degenerate symmetric bilinear form
$\intf{\phantom{a}, \phantom{a}}\colon M\times M\to \Q$
such that $M\tensor \R$ is of signature $(1,n-1)$.
(For example, $M$ is a hyperbolic lattice or the  dual lattice of a hyperbolic lattice.)
Let $h\in M$ be a vector such that  $\intf{h, h}>0$.
Then, for given rational numbers $a$ and $b$, we can make the list of all vectors $x$ of $M$ 
that satisfy
$\intf{h, x}=a$ and 
$\intf{x, x}=b$.
\end{algorithm}
\begin{algorithm}\label{algo:Sep}
Let $L$ be a hyperbolic lattice,
and let $h, h\sprime$ be vectors of $L$  that satisfy $\intf{h, h}>0$, $\intf{h\sprime, h\sprime}>0$ and $\intf{h, h\sprime}>0$.
Then, for a negative integer $d$, we can make 
the list of all vectors $x$ of $L$ 
that satisfy
$\intf{h, x}>0$, 
$\intf{h\sprime,  x}<0$ and $\intf{x,  x}=d$.
\end{algorithm}
\begin{remark}
These algorithms are based on an algorithm of positive-definite quadratic forms 
described in Section 3.1 of~\cite{MR3166075}.
This algorithm can be made much faster by means of the lattice reduction basis~\cite{MR682664}.
See  Section 2.7 of~\cite{MR1228206}.
\end{remark}
Let $L$ be an even hyperbolic lattice,
and let $\PPP(L)$ be one of the two connected components of $\shortset{x\in L\tensor\R}{\intf{x,x}>0}$,
which we call a \emph{positive cone} of $L$.
A vector $r\in L$ is called a \emph{$(-2)$-vector} if $\intf{r, r}=-2$ holds.
For a $(-2)$-vector $r$,
we have a \emph{reflection} $s_r\colon x\mapsto x+\intf{x, r}\, r$
in the hyperplane 
$$
(r)\sperp :=\set{x\in L\tensor \R}{\intf{r, x}=0}.
$$
Each $s_r\in \OG(L)$ acts on $\PPP(L)$.
The reflections $s_r$ with respect to all $(-2)$-vectors $r$ generate a subgroup $W(L)$ of $\OG(L)$.
The closure in $\PPP(L)$ of a connected component of
$$
\PPP(L)\;\setminus\; \bigcup\; (r)\sperp
$$
is called a \emph{standard fundamental domain} of the action of $W(L)$ on $\PPP(L)$.
Let $\NNN$ be a standard fundamental domain,
and let $h$ be an element of $\NNN\cap L$ such that $h\notin (r)\sperp$ for any $(-2)$-vectors $r$.
Then Algorithm~\ref{algo:Sep} applied to $d=-2$
provides us with a method to determine whether a given vector $h\sprime \in \PPP(L)\cap L$
is contained in $\NNN$ or not.
\par
The \emph{N\'eron-Severi lattice} $S_Y$ of an algebraic  $K3$ surface $Y$ is the $\Z$-module of numerical equivalence classes of divisors on $Y$
with the intersection pairing.
The lattice $S_Y$  is even,
and if its rank is $>1$, it is hyperbolic.
The class of a curve $C$ on $Y$ is denoted by $[C]\in S_Y$.
Suppose that $\rank \SX>1$.
%Let $\PPP(\SX)$ be the positive cone of $\SX$ containing an ample class.
It is well-known that 
the \emph{nef cone} 
$$
\set{x\in S_Y\tensor\R}{\textrm{$\intf{x, [C]}\ge 0$ for any curve $C$ on $Y$}}
$$
is   a  standard fundamental domain  of the action of $W(\SX)$.
When $Y$ is defined over $\C$, 
the second cohomology group $H^2(Y, \Z)$ of a complex $K3$ surface $Y$ with the cup product 
is an even, unimodular lattice of signature $(3, 19)$
containing  $S_Y$ as a primitive sublattice.
\section{The Fermat quartic surface $\XFer$}\label{sec:Fermat}
The complex Fermat quartic surface is denoted by $\XFer\subset \PP^3$.
The complex surface underlying $\XFer$ is 
simply denoted by $X$.
We describe the N\'eron-Severi lattice $\SX$ of $X$
in terms of the lines on $\XFer$.
In particular,
we study  the condition for an isometry $g\in \OG(\SX)$ of $\SX$ to extend to a Hodge isometry of 
 $H^2(X, \Z)$.
\par
The \emph{transcendental lattice} $\TX$ of $X$ is defined to be the orthogonal complement of $\SX$ in $H^2(X, \Z)$.
It is known~\cite{Mizukami},~\cite{MR0441982} that 
 $\SX$ 
is of rank  $20$, and $\TX$ is isomorphic to the positive-definite lattice 
\begin{equation}
T:=\left[
\begin{array}{cc} 8 & 0 \\ 0 & 8
\end{array}
\right]
\label{eq:GramT}
\end{equation}
of rank $2$.
By~\cite{MR525944}, 
we have an anti-isometry $q_{S_X}\isom -q_{T_X}$ of discriminant forms,  and hence $|\disc (S_X)|=64$. 
\par
The surface $\XFer$   contains exactly $48$ lines.
These lines  are labelled by the tags 
$[i, [\mu,  \nu]]$ in the following way,
where $i$ is an integer satisfying $1< i\le 4$,
and $\mu$ and $\nu$ are positive odd integers $\le 7$.
Let $j$ and $k$ be integers such that $j<k$ and $\{1, i, j, k\}=\{1,2,3,4\}$.
Then the line  on $\XFer$ labelled by  ${[i, [\mu, \nu]]}$ is defined by
$$
x_1+\zeta^{\mu} x_{i}=0, \quad x_{j}+\zeta^{\nu} x_{k}=0.
$$
All lines on $\XFer$ are obtained  in this way.
Following~\cite{MR0011003}, 
we call a  point $Q\in \XFer$  a \emph{$\tau$-point} if 
the intersection   $\XFer\cap T_Q(\XFer)$  of $\XFer$ and   the tangent plane  $T_Q(\XFer)\subset \PP^3$  to $\XFer$ at $Q$
consists of four lines passing though $Q$.
There exist exactly $24$  $\tau$-points,
and each line on $\XFer$ contains exactly two $\tau$-points.
If three distinct lines on $\XFer$ have a common point $Q$,
then $Q$ is a $\tau$-point,
and hence these three lines  
are coplanar.
The converse is also true; 
if three distinct lines on $\XFer$ are coplanar,
then they have a common point, which is a $\tau$-point.
\par
We choose the  $20$ lines $l_1, \dots, l_{20}$ labelled by the following tags respectively:
\begin{eqnarray*}
&&[ 2, [ 1, 1 ] ], \;[ 2, [ 1, 3 ] ], \;[ 2, [ 1, 5 ] ], \;[ 2, [ 1, 7 ] ], \\
&& [ 2, [ 3, 1 ] ], \;[ 2, [ 3, 3 ] ], \;[ 2, [ 3, 5 ] ], \;[ 2, [ 5, 1 ] ], \;[ 2, [ 5, 3 ] ], 
  [ 2, [ 5, 5 ] ], \;[ 3, [ 1, 1 ] ], \;[ 3, [ 1, 3 ] ], \\
  &&[ 3, [ 1, 5 ] ], \;[ 3, [ 3, 1 ] ], \;[ 3, [ 3, 3 ] ], \;[ 3, [ 3, 5 ] ], \;[ 4, [ 1, 1 ] ], \;[ 4, [ 1, 3 ] ], 
  [ 4, [ 1, 5 ] ], \;[ 4, [ 3, 1 ] ].
\end{eqnarray*}
\begin{table}%
{\small%
$$
\renewcommand{\arraycolsep}{.7mm}
\left[ \begin {array}{cccccccccccccccccccc} 
-2&1&1&1&1&0&0&1&0&0&1&0&0&0&1&0&0&0&0&1
\\ \noalign{\medskip}1&-2&1&1&0&1&0&0&1&0&0&1&0&0&0&1&0&0&1&0
\\ \noalign{\medskip}1&1&-2&1&0&0&1&0&0&1&0&0&1&0&0&0&0&1&0&0
\\ \noalign{\medskip}1&1&1&-2&0&0&0&0&0&0&0&0&0&1&0&0&1&0&0&0
\\ \noalign{\medskip}1&0&0&0&-2&1&1&1&0&0&0&0&0&1&0&0&0&0&1&0
\\ \noalign{\medskip}0&1&0&0&1&-2&1&0&1&0&1&0&0&0&1&0&0&1&0&0
\\ \noalign{\medskip}0&0&1&0&1&1&-2&0&0&1&0&1&0&0&0&1&1&0&0&0
\\ \noalign{\medskip}1&0&0&0&1&0&0&-2&1&1&0&0&1&0&0&0&0&1&0&0
\\ \noalign{\medskip}0&1&0&0&0&1&0&1&-2&1&0&0&0&1&0&0&1&0&0&0
\\ \noalign{\medskip}0&0&1&0&0&0&1&1&1&-2&1&0&0&0&1&0&0&0&0&1
\\ \noalign{\medskip}1&0&0&0&0&1&0&0&0&1&-2&1&1&1&0&0&1&0&0&0
\\ \noalign{\medskip}0&1&0&0&0&0&1&0&0&0&1&-2&1&0&1&0&0&1&0&1
\\ \noalign{\medskip}0&0&1&0&0&0&0&1&0&0&1&1&-2&0&0&1&0&0&1&0
\\ \noalign{\medskip}0&0&0&1&1&0&0&0&1&0&1&0&0&-2&1&1&0&1&0&1
\\ \noalign{\medskip}1&0&0&0&0&1&0&0&0&1&0&1&0&1&-2&1&0&0&1&0
\\ \noalign{\medskip}0&1&0&0&0&0&1&0&0&0&0&0&1&1&1&-2&0&0&0&0
\\ \noalign{\medskip}0&0&0&1&0&0&1&0&1&0&1&0&0&0&0&0&-2&1&1&1
\\ \noalign{\medskip}0&0&1&0&0&1&0&1&0&0&0&1&0&1&0&0&1&-2&1&0
\\ \noalign{\medskip}0&1&0&0&1&0&0&0&0&0&0&0&1&0&1&0&1&1&-2&0
\\ \noalign{\medskip}1&0&0&0&0&0&0&0&0&1&0&1&0&1&0&0&1&0&0&-2
\end {array} \right] 
$$
}%
\caption{Gram matrix $\GramSX$ of $\SX$}\label{table:GramSX}%
\end{table}%
Their intersection matrix  $\GramSX$ is given in Table~\ref{table:GramSX}.
Since the determinant of this  matrix is $-64=-|\disc (S_X)|$,
we see that the classes of these $20$ lines form a basis of  $\SX$,
and the matrix $\GramSX$  is the Gram matrix of $\SX$ with respect to this basis.
From now on,
every vector of $\SX$ is written  as a \emph{row} vector with respect to this basis.
Since the four lines $l_1, \dots, l_{4}$  are on the  plane $x_1+\zeta x_2=0$, 
the class $\hFer$  of the  hyperplane section of $\XFer\inj \PP^3$  is given by 
$$
\hFer=[ 1, 1, 1, 1, 0, 0, 0, 0, 0, 0, 0, 0, 0, 0, 0, 0, 0, 0, 0, 0 ].
$$
By Riemann-Roch theorem,
the set of classes $[\ell]$ of lines $\ell$ on $\XFer$ is equal to 
$$
\Lines_{48}:=\set{r\in \SX}{\intf{r, r}=-2, \;\; \intf{r, \hFer}=1}.
$$
This set can be calculated by Algorithm~\ref{algo:AffES}.
The class of each line 
 is also computed  from the intersection numbers with $l_1, \dots, l_{20}$.
We have a basis of  $\SX\dual$  dual to the fixed basis $[l_1], \dots, [l_{20}]$ of $\SX$.
To distinguish the vector representation with respect to the non-dual basis $[l_1], \dots, [l_{20}]$ of $\SX$
and that with respect to the dual basis of $\SX\dual$,
we put  a superscript ${}\dual$  on the dual representation. % of an element of  $\SX\tensor \Q$.
Thus we have a relation 
$$
x\dual=x \, \GramSX
$$
between the non-dual vector representation $x \in \Q^{20}$ of an element  $v\in \SX\tensor\Q$ 
and the dual representation $x\dual$ of $v$.
\par
Consider the  following vectors of $\SX\dual$:
\begin{eqnarray*}
 s_1 &:=&  [ 3, 1, 2, 2, 1, 3, 2, 2, 2, 2, 2, 3, 1, 2, 1, 2, 2, 1, 3, 1 ]\dual, \\
 s_2 &:=&   [ 1, 3, 1, 1, 1, 1, 3, 2, 1, 0, 1, 1, 2, 2, 3, -1, 1, 2, 0, 2 ] \dual.
\end{eqnarray*}
Then the elements 
$$
\sigma_1:=s_1 \bmod \SX, \;\; 
\sigma_2:=s_2 \bmod \SX
$$
of  $\disc (\SX)$
form a basis of  $\disc (\SX)\cong (\Z/8\Z)^2$,
under which the discriminant form $q_{\SX}$ is given by the matrix
$$
\frac{1}{8}\left[\begin{array}{cc} 11 & 5\\ 5 & 14\end{array}\right],
$$
where the diagonal components are in $\Q/2\Z$ and the off-diagonal components are in $\Q/\Z$.
Let $P$ be the $20\times 2$ matrix
$$
\left[ \begin {array}{cccccccccccccccccccc} 7&2&5&6&0&6&6&7&2&7&6&4&6
&2&4&2&4&0&4&0\\ \noalign{\medskip}0&5&3&2&7&6&3&1&7&6&0&6&2&0&2&6&4&4
&4&4\end {array} \right]\hskip -7pt {\phantom{\Bigl(}}^T 
$$
with components in $\Z/8\Z$.
Then the  quotient homomorphism  
$\SX\dual \to \disc(\SX)$ is given by $x\dual \mapsto x\dual P$
with respect to the basis $\sigma_1, \sigma_2$ of $\disc(\SX)$.
When we are given an element $g$ of $\OG(\SX)$ as a $20\times 20$ matrix $R_g$
with respect to the  basis $[l_1], \dots, [l_{20}]$ of $\SX$,
the automorphism  $\eta_{\SX}(g)$  of $q_{\SX}$ induced by $g$ is represented  with respect to the basis $\sigma_1, \sigma_2$  by the $2\times 2$ matrix 
\begin{equation}\label{eq:GRGP}
\left[\begin{array}{c} s_1 \\ s_2 \end{array}\right] \GramSX\inv R_g \GramSX P
\end{equation}
with components in $\Z/8\Z$.
\par
Let $t_1$ and $t_2$ be a basis of the lattice
$T$ under which the Gram matrix is given in~\eqref{eq:GramT},
and let $\C\omega$ be a totally isotropic subspace of $T\tensor \C$.
We have
$$
T\tensor \C=\C\omega\oplus \C\bar{\omega}.
$$
Let $t_1\dual, t_2\dual$ be the basis of $T\dual$  dual to the basis  $t_1, t_2$ of $T$.
Then the elements 
$\tau_1:=t_1\dual \bmod T$ and  $\tau_2:=t_2\dual \bmod T$
form a basis of  $\disc (T)\cong (\Z/8\Z)^2$,
under which  $q_{T}$ is given by the matrix
$$
\frac{1}{8}\left[\begin{array}{cc} 1& 0\\ 0& 1\end{array}\right].
$$
As above, we can calculate the natural homomorphism $\eta_T\colon \OG(T)\to \OG(q_T)$ explicitly.
It is easy to see that $\OG(T)$ is of order $8$,
$\OG(q_T)$ is of order $16$,
and $\eta_T$ is injective.
Moreover,
we see that the group 
$$
\tilde{\Gamma}_T:=\set{g\in \OG(T)}{ \C\omega^g=\C\omega}
$$
is of order $4$.
Since $T$ is isomorphic to the transcendental lattice $T_X$,
there exists an isomorphism $q_{S_X}\cong -q_T$ by~\cite{MR525944}.
In fact, since $\OG(q_T)$ is of order $16$, 
there exist exactly $16$ isomorphisms  from $q_{S_X}$ to $-q_T$.
For an isomorphism $\varphi\colon q_{S_X}\isom -q_T$,
let $\varphi_*\colon \OG(q_{S_X})\isom \OG(q_T)$ be the induced isomorphism.
It turns out that 
the subgroup 
$$
\Gamma_{\SX}:=\set{\gamma \in \OG(q_{\SX})}{\varphi_*(\gamma) \in \eta_T(\tilde\Gamma_T)}
$$
of $\OG(q_{\SX})$ does \emph{not} depend on the choice of $\varphi$;
we have 
\begin{equation}\label{eq:GammaSX}
\Gamma_{\SX}=\left\{\;\;
\left[ \begin {array}{cc} 1&0\\ \noalign{\medskip}0&1\end {array}
 \right], 
 \left[ \begin {array}{cc} 3&3\\ \noalign{\medskip}2&5\end {array}
 \right], 
 \left[ \begin {array}{cc} 5&5\\ \noalign{\medskip}6&3\end {array}
 \right], 
 \left[ \begin {array}{cc} 7&0\\ \noalign{\medskip}0&7\end {array}
 \right] \;\;\right\} \;\; \subset\;\; \GL_2(\Z/8\Z).
 \end{equation}
Note that an isometry $\tilde{g}$ of the lattice $H^2(X, \Z)$ is a Hodge isometry
if and only if $\tilde{g}$ preserves $T_X$ and its orientation.
If $\tilde{g}$ preserves $T_X$,
then the  orientation of $T_X$ is preserved if and only if $\tilde{g}|_{T_X}$ belongs to $\tilde{\Gamma}_T$
under an (and hence any)  isometry $T\cong T_X$.
Hence, 
by~\cite{MR525944}, 
we see that an isometry  $g\in \OG(\SX)$ extends to a Hodge  isometry  of $H^2(X, \Z)$ if and only if 
\begin{equation}\label{eq:preserveomegacond}
\eta_{\SX}(g)\in \Gamma_{\SX}.
\end{equation}
This condition can be checked computationally using~\eqref{eq:GRGP}~and~\eqref{eq:GammaSX}.
\par
We let the automorphism group $\Aut(X)$  act on  $X$ from the left, and act on $\SX$ from the right by the pull-back.
The following facts can be checked by direct computation by means of  the data we have prepared so far.
We consider the subgroup 
$$
\Aut(\XFer):=\set{\gamma\in \PGL_4(\C)}{ \gamma(\XFer)=\XFer}
$$
on $\Aut(X)$, which is known to be of order $1536$
and generated by the permutations of coordinates $x_1, \dots, x_4$ and the scalar-multiplications by $\zeta^2$ of coordinates.
We denote by 
$$
G_{48}:=\Image (\Aut(\XFer)\to \OG(\SX))
$$
the image of $\Aut(\XFer)$ in $\OG(\SX)$ by the natural representation, which is injective.
Since the set $\FFF_{48}$ of classes of lines on $\XFer$ spans $\SX$, 
the stabilizer subgroup 
$$
\tilGFer:=\set{g\in \OG(\SX)}{\hFer^g=\hFer}
$$
of $\hFer$  
is isomorphic to the group of permutations of $\FFF_{48}$ that preserve the intersection numbers.
The mapping 
$$
g\mapsto ([l_1]^g, \dots, [l_{20}]^g)
$$
gives a bijection from $\tilGFer$ to the set of  ordered lists $([l_1\sprime], \dots, [l_{20}\sprime])$ 
of elements of $\FFF_{48}$ that satisfy
$$
\intf{[l_i\sprime], [l_j\sprime]}=\intf{[l_i], [l_j]}\;\; \textrm{for all $i, j=1,\dots, 20$}.
$$
We calculate the set of all these  $([l_1\sprime], \dots, [l_{20}\sprime])$  
by the  standard backtrack program~(see~\cite{MR0373371} for the meaning of the backtrack program), 
and calculate $\tilGFer$ as a list of elements of $\OG(\SX)$.
Since each line on $\XFer$ is defined over $\Q(\zeta)$, 
the Galois group
$\Gal(\Q(\zeta)/\Q)\cong (\Z/2\Z)^2$ also acts on $\FFF_{48}$ preserving the intersection numbers,
and hence acts on $\SX$.
It turns out that  $\tilGFer$ is of order $6144$ and is generated by $G_{48}$ and $\Gal(\Q(\zeta)/\Q)$.
By Torelli theorem~\cite{MR0284440}, the subgroup   $G_{48}$ of $\tilGFer$
 consists of elements $g\in \tilGFer$ that satisfy the period-preserving condition~\eqref{eq:preserveomegacond}.
\par
We review the result of B.~Segre~\cite{MR0011003}
on the set of pairs of lines on $\XFer$.
The group $\Aut(\XFer)$ acts on the $48$ lines transitively.
Let $\PPP_{i}$ be the set of  pairs of intersecting lines on $\XFer$,
and let $\PPP_{d}$ be the set of  pairs of disjoint  lines on $\XFer$.
The orbit decompositions $\PPP_{i}=o_1\sqcup o_2 \sqcup o_3$ 
and $\PPP_{d}=o_4\sqcup \dots \sqcup o_8$  of these sets by the action of   $\Aut(\XFer)$ are as follows:
$$
\begin{array}{lllllll}
\{\;\; [ 2, [ 1, 1 ] ],\; [ 2, [ 1, 5 ] ]  \;\;\} &\in& o_{ 1 }&\subset &\PPP_{i}, &|o_{ 1 }|= 48 ,\\ 
\{\;\; [ 2, [ 1, 1 ] ],\; [ 2, [ 1, 3 ] ]  \;\;\} &\in& o_{ 2 }&\subset &\PPP_{i}, &|o_{ 2 }|= 96 ,\\ 
\{\;\; [ 2, [ 1, 1 ] ],\; [ 3, [ 1, 1 ] ]  \;\;\} &\in& o_{ 3 }&\subset &\PPP_{i}, &|o_{ 3 }|= 192 , \mystruthd{0pt}{6pt}\\ 
\hline 
\{\;\; [ 2, [ 1, 1 ] ],\; [ 2, [ 5, 5 ] ]  \;\;\} &\in& o_{ 4 }&\subset &\PPP_{d}, &|o_{ 4 }|= 24 , \mystruthd{12pt}{0pt}\\ 
\{\;\; [ 2, [ 1, 1 ] ], \;[ 2, [ 3, 3 ] ]  \;\;\} &\in& o_{ 5 }&\subset &\PPP_{d}, &|o_{ 5 }|= 96 ,\\ 
\{\;\; [ 2, [ 1, 1 ] ],\; [ 2, [ 3, 5 ] ]  \;\;\} &\in& o_{ 6 }&\subset &\PPP_{d}, &|o_{ 6 }|= 96 ,\\ 
\{\;\; [ 2, [ 1, 1 ] ],\; [ 3, [ 1, 5 ] ]  \;\;\} &\in& o_{ 7 }&\subset &\PPP_{d}, &|o_{ 7 }|= 192 ,\\ 
\{\;\; [ 2, [ 1, 1 ] ], \;[ 3, [ 1, 3 ] ]  \;\;\} &\in& o_{ 8 }&\subset &\PPP_{d}, &|o_{ 8 }|= 384.
\end{array}
$$
For each orbit $o_i$,
we define an $8\times 8$ matrix $A(o_i)=(a_{jk})$ as follows.
Let $\{\ell, \ell\sprime\}$ be a pair in  $o_i$.
We put 
$$
a_{jk}:=\textrm{the number of lines $\ell\spprime$ such that $\{\ell, \ell\spprime \}\in o_j$ and $\{\ell\sprime, \ell\spprime \}\in o_k$.}
$$
The $3\times 3$ upper-left part $A\sprime(o_i)=(a_{jk})_{1\le j, k\le 3}$
of each of these matrices are given in Table~\ref{table:Ai}.
%Thus the eight orbits are distinguished  by the numerical invariant $A(o_i)$.
%
\begin{remark}
Let $\{\ell, \ell\sprime\}$  be a pair of intersecting lines.
Then $\ell$ and $\ell\sprime$ intersect at a $\tau$-point
if and only if  $\{\ell, \ell\sprime\}$  belongs to  $o_1$ or to $o_2$.
\end{remark}
\begin{table}{\footnotesize%
$$ 
A\sprime(o_1)=\left[ \begin {array}{ccc} 
0&0&0\\ 
  0&2&0\\ 
  0&0&0
\end {array} \right],
\qquad
A\sprime(o_2)= \left[ \begin {array}{ccc} 
0&1&0\\ 
  1&0&0\\ 
  0&0&0
  \end {array} \right],
  \quad
  A\sprime(o_3)= \left[ \begin {array}{ccc} 
0&0&0\\ 
  0&0&0\\ 
  0&0&0
 \end {array} \right],
 $$
 \medskip
$$
 A\sprime(o_4)=\left[ \begin {array}{ccc} 
 2&0&0\\
  0&0&0\\
  0&0&8\
  \end {array} \right],
\quad
A\sprime(o_5)=\left[ \begin {array}{ccc} 
 0&0&0\\
  0&2&0\\
  0&0&4
\end {array} \right],
\quad
A\sprime(o_6)=\left[ \begin {array}{ccc} 
 0&1&0\\ 
  1&0&0\\ 
  0&0&0
\end {array} \right],
$$
\par
\smallskip
$$
A(o_7)= \left[ \begin {array}{ccc} 
0&0&2\\ 
  0&0&0\\ 
  2&0&0
\end {array} \right],
\quad
A\sprime(o_8)=\left[ \begin {array}{ccc} 
 0&0&0\\ 
  0&0&2\\ 
  0&2&2
\end {array} \right].
\quad
\phantom{
A\sprime(o_8)=\left[ \begin {array}{ccc} 
 0&0&0\\ 
  0&0&2\\ 
  0&2&2
\end {array} \right] 
}
$$ 
}
\caption{Matrices $A\sprime(o_i)$}\label{table:Ai}\end{table}
\erase{
\par
The orbit $o_4$ of size $24$ plays an important role in the next section.
Let $\{\ell_1, \ell_2\}$ be a member of  $o_4$.
Then there exist exactly $10$ lines $n_1, n_2$, $m_1, \dots, m_8$  that intersect both of $\ell_1$ and $\ell_2$.
These $10$ lines are mutually disjoint.
The intersection point of $\ell_i$  and $n_j$  is a $\tau$-point for $j=1, 2$,
while the intersection point of $\ell_i$  and $m_j$  is not a $\tau$-point for $j=1, \dots, 8$.
}
\section{The quartic surface $\Xfs$}\label{sec:X56}
For $v\in S_X$, let $\LLL_v\to X$ be a line bundle whose class is $v$.
We say that $h\in\SX$ is a \emph{polarization of degree $4$}
if $\intf{h,h}=4$ and the complete linear system $|\LLL_h|$ is fixed-component free.
By~\cite{MR0364263}, if $h$ is a polarization of degree $4$, then $|\LLL_h|$ is base-point free
and defines a morphism  $\Phi_{h} \colon X\to \P^3$.
We say that a  polarization $h$ of degree $4$ is \emph{very ample}
if $\Phi_{h}$ is an embedding.
\begin{theorem}\label{thm:pol}
A class $h\in \SX$ with $\intf{h, h}=4$ is a very ample polarization of degree $4$ if and only if the following hold:
\begin{itemize}
\item[(a)] $\intf{h, \hFer}>0$, 
\item[(b)] $\set{r\in \SX}{\intf{r,r}=-2, \;\intf{r, \hFer}>0, \; \intf{r, h}<0}$ is empty, 
\item[(c)] $\set{e\in \SX}{\intf{e,e}=0, \;\intf{e, h}=1}$ is empty, 
\item[(d)]$\set{e\in \SX}{\intf{e,e}=0, \;\intf{e, h}=2}$ is empty, and
\item[(e)]$\set{r\in \SX}{\intf{r,r}=-2, \;\intf{r, h}=0}$ is empty.
\end{itemize}
If $h\in \SX$ is a very ample polarization of degree $4$,
then the set $\FFF_h$ of classes of lines contained in the image $X_h$ of $\Phi_{h}\colon X\to \P^3$ is equal to
$$
\set{r\in \SX}{\intf{r,r}=-2, \;\intf{r, h}=1}.
$$
\end{theorem}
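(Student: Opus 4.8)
The plan is to derive the criterion from Saint-Donat's analysis of complete linear systems on K3 surfaces in~\cite{MR0364263}, by translating each of (a)--(e) into a geometric property of $\Phi_h$. The basic dictionary is Riemann--Roch on the K3 surface $X$, which gives $\chi(\LLL_v)=2+\intf{v,v}/2$; hence for any class $v$ with $\intf{v,v}\ge -2$ either $v$ or $-v$ is effective, and since $\hFer$ is ample the effective one is characterized by $\intf{v,\hFer}>0$. First I would use this to show that (a) and (b) together say precisely that $h$ is nef and lies in the positive cone $\PPP(\SX)$ containing $\hFer$. Indeed, as $\intf{h,h}=4>0$, condition (a) forces $h$ and $\hFer$ into the same component of $\shortset{x}{\intf{x,x}>0}$ (two such vectors in a hyperbolic lattice lie in one component iff their product is positive). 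By the remark above, the effective $(-2)$-classes are exactly those $r$ with $\intf{r,r}=-2$ and $\intf{r,\hFer}>0$; since the nef cone is the standard fundamental domain cut out by the inequalities $\intf{\cdot,r}\ge 0$ over all such $r$, condition (b) is exactly nefness of $h$. Thus (a)$+$(b) make $h$ nef and big.

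Next I would invoke Saint-Donat's theorem for a nef class $h$ with $\intf{h,h}=4$: the system $|\LLL_h|$ is base-point free unless there is an irreducible curve $E$ with $\intf{E,E}=0$ and $\intf{E,h}=1$; the morphism $\Phi_h$ is birational onto its image unless there is such an $E$ with $\intf{E,h}=2$ (the second hyperelliptic possibility $h=2B$, $\intf{B,B}=2$, is excluded because it would force $\intf{h,h}=8$); and a base-point-free birational $\Phi_h$ is a closed immersion unless it contracts a $(-2)$-curve $\Gamma$ with $\intf{\Gamma,h}=0$. Conditions (c), (d), (e) forbid exactly these three phenomena. For the implication from (a)--(e) to very ampleness this is immediate, since an irreducible curve (or a $(-2)$-curve) is in particular a numerical class, so the nonexistence of the numerical classes in (c), (d), (e) rules out the corresponding curves; together with (a)$+$(b) this yields base-point freeness, hence $|\LLL_h|$ is fixed-component free and $h$ is a polarization of degree $4$, and then non-hyperellipticity and the absence of contracted $(-2)$-curves give very ampleness.

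The reverse implication is where the main technical work lies, because (c), (d), (e) are phrased for arbitrary classes while Saint-Donat's obstructions involve irreducible curves; I must therefore produce an irreducible witness from a merely numerical one. Given a nonzero isotropic class $e$, I would first make it effective, then remove its fixed $(-2)$-curves and divide out the largest integer factor to arrive at a primitive nef isotropic class $f$; because $h$ is nef each step does not increase the pairing with $h$, so $\intf{f,h}\le\intf{e,h}$, and because $\intf{h,h}>0$ the orthogonal complement of $h$ is negative definite, whence $\intf{f,h}>0$. The class $f$ defines a genus-one pencil whose general fibre is an irreducible curve $E$ with $\intf{E,h}\in\{1,2\}$ when (c) or (d) fails, producing a forbidden base point (if $\intf{E,h}=1$) or a $2:1$ map (if $\intf{E,h}=2$); similarly, an effective $(-2)$-class orthogonal to the nef $h$ must contain an irreducible $(-2)$-curve $\Gamma$ with $\intf{\Gamma,h}=0$. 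Running this dictionary by contraposition shows that the failure of any one of (a)--(e) exhibits non-effectivity or non-nefness of $h$, a base point, a hyperelliptic map, or a contracted curve, each incompatible with $h$ being a very ample polarization; this completes the equivalence.

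Finally, for the description of $\FFF_h$, assume $h$ is very ample with image the smooth quartic $X_h\subset\P^3$. A line $\ell\subset X_h$ is a smooth rational curve, so $\intf{[\ell],[\ell]}=-2$, and $\intf{[\ell],h}=\deg\ell=1$; thus $[\ell]$ lies in $\shortset{r}{\intf{r,r}=-2,\;\intf{r,h}=1}$. Conversely, for $r$ in this set, Riemann--Roch and nefness of $h$ (so $\intf{r,h}=1>0$) give that $r$ is effective with $h^0(\LLL_r)=1$; the image under the closed immersion $\Phi_h$ of the unique member $D\in|\LLL_r|$ is a curve of degree $\intf{r,h}=1$ in $\P^3$, i.e. a line, and $D$ maps isomorphically onto it, so $D$ is itself a line on $X_h$ with $[D]=r$. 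Since distinct lines have distinct classes (as $h^0(\LLL_r)=1$), the assignment $\ell\mapsto[\ell]$ is a bijection and the two sets coincide.
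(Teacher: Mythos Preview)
Your proof is correct and follows essentially the same route as the paper's own proof: both translate (a)--(e) step by step into the geometric properties ``$h$ lies in the positive cone containing $\hFer$'', ``$h$ is nef'', ``$|\LLL_h|$ is fixed-component free'', ``$\Phi_h$ is not hyperelliptic'', and ``$\Phi_h$ contracts no $(-2)$-curve'', invoking Saint-Donat~\cite{MR0364263} for the last three. The paper's proof is terser---it simply asserts each equivalence, citing Nikulin~\cite{MR1260944} for (c) and Saint-Donat for (d), and calls the description of $\FFF_h$ ``obvious''---whereas you supply the extra work of passing from an arbitrary numerical class in (c), (d), (e) to an irreducible curve witnessing the obstruction, and you spell out the Riemann--Roch argument identifying $\FFF_h$; but the overall strategy is the same.
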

\begin{proof}
The condition (a) is equivalent to the condition  that $h$ is in the positive cone 
$\PPP(\SX)$ of $\SX\tensor \R$ containing $\hFer$.
Suppose that (a) holds.
Since the nef-cone of $X$ is a standard fundamental domain of 
the action of $W(\SX)$ on $\PPP(\SX)$,  
the condition (b) is equivalent to  the condition that $h$ is nef.
Suppose that (a) and (b) hold.
By Proposition 0.1 of~\cite{MR1260944}, 
the condition (c) is equivalent to  the condition that $|\LLL_h|$ is fixed-component free,
and hence defines a morphism $\Phi_h\colon X\to \P^3$.
Suppose that (a)--(c) hold.
By~\cite{MR0364263},
the condition (d) is equivalent to  the condition that $\Phi_{h}$ is not hyperelliptic, that is,
$\Phi_h$ is generically injective.
Suppose that (a)--(d) hold.
The condition (e) is equivalent to  the condition that $\Phi_{h}$ does not contract any $(-2)$-curves, that is,
the image $X_h$ of $\Phi_h$ is smooth.
The second assertion is obvious.
\end{proof}
Note that the conditions (a)--(e) can be checked by means of Algorithms~\ref{algo:AffES}~and~\ref{algo:Sep}, and 
 that the set $\FFF_h$ can be  calculated by Algorithm~\ref{algo:AffES}.
\par
We say that $h\in \SX$  is an \emph{$\Xfs$-polarization}
if $h$ is a very ample polarization  of degree $4$ such that $X_h\subset \P^3$ contains 
 exactly $56$ lines.
 The \emph{relative degree} of  a very ample polarization  $h$  of degree $4$ is defined to be $\intf{h, \hFer}$.
 \par
Using Algorithm~\ref{algo:AffES}, we calculate the set
 $$
 \HHH_d:=\set{v\in \SX}{\intf{v, \hFer}=d, \;\;\intf{v, v}=4}
 $$
 for $d=1, \dots, 6$.
 Note that $G_{48}$ acts on each  $\HHH_d$.
 We have $\HHH_d=\emptyset$ for $d<4$, and 
 $$
 \HHH_4=\{\hFer\},\;\;\;
 |\HHH_5|=48,\;\;\;
| \HHH_6|=48264.
 $$
 The action of $G_{48}$ on $\HHH_5$ is transitive,  and 
no vectors in $\HHH_5$ are nef.
 The action of $G_{48}$ decomposes $\HHH_6$ into $60$ orbits.
Among the vectors in $\HHH_6$,
\begin{itemize}
\item $792$ vectors in $5$ orbits are not nef,
\item $792$ vectors in $5$ orbits are nef, fixed-component free, but define hyperelliptic morphism,
\item $46296$ vectors in $48$ orbits are nef, fixed-component free,  define non-hyperelliptic morphism, but the images are singular, and 
\item the remaining $384$ vectors in $2$ orbits are very ample, and the images contain exactly $56$ lines.
The larger group $\tilGFer$ acts on these $384$ vectors transitively.
\end{itemize}
Thus we obtain the following theorem.
 \begin{theorem}
 {\rm (1)}
 If $h\in \SX$ is a very ample polarization of degree $4$
 with relative degree $6$, then $h$ is an $\Xfs$-polarization.
 {\rm (2)}
 There exist exactly $384$ $\Xfs$-polarizations of relative degree $6$.
Up to the action of $\Aut(\XFer)$ and $\Gal(\Q(\zeta)/\Q)$,
there exists only one $\Xfs$-polarization of relative degree $6$.  
\end{theorem}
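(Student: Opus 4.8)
The plan is to reduce both assertions to finite computations that the machinery set up in Sections~\ref{sec:prelim} and~\ref{sec:Fermat} already supports, namely the enumeration of the set $\HHH_6$ together with the numerical criteria of Theorem~\ref{thm:pol} and the Hodge-isometry condition~\eqref{eq:preserveomegacond}. The key observation is that a very ample polarization of degree~$4$ and relative degree~$6$ is, by definition, a vector $h\in\HHH_6$ satisfying the conditions~(a)--(e), and that for such an $h$ the set $\FFF_h$ of classes of lines on the image $X_h$ is computable by Algorithm~\ref{algo:AffES} as $\set{r\in\SX}{\intf{r,r}=-2,\ \intf{r,h}=1}$. So part~(1) amounts to verifying that every very ample vector in $\HHH_6$ has exactly $56$ such $r$, and part~(2) amounts to counting those very ample vectors and analysing the orbit structure under the relevant groups.

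First I would run Algorithm~\ref{algo:AffES} to produce the full list $\HHH_6$, which the preceding discussion reports has $48264$ elements, and note that $G_{48}$ acts on it, decomposing it into $60$ orbits. For each orbit representative $h$ I would check conditions~(a)--(e) of Theorem~\ref{thm:pol}: condition~(a) is the positive-cone test $\intf{h,\hFer}>0$, conditions~(b) and~(e) are handled by Algorithm~\ref{algo:Sep} applied to $(-2)$-vectors (testing nef-ness and the absence of contracted $(-2)$-curves), and conditions~(c),(d) are isotropic-vector searches again covered by Algorithm~\ref{algo:AffES}. This sorts the $60$ orbits into the four classes itemised before the theorem statement; the crucial outcome is that exactly $2$ orbits, comprising $384$ vectors in total, survive all five conditions and are therefore very ample. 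For each surviving $h$ I would then compute $|\FFF_h|$ by Algorithm~\ref{algo:AffES} and confirm it equals $56$, which establishes that every very ample polarization of relative degree~$6$ is an $\Xfs$-polarization, giving part~(1) and the first count in part~(2).

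For the transitivity claim in part~(2) I would pass from $G_{48}$ to the larger stabiliser $\tilGFer$ of $\hFer$, which by Section~\ref{sec:Fermat} has order $6144$ and is generated by $G_{48}$ together with $\Gal(\Q(\zeta)/\Q)$. Since $\tilGFer$ preserves $\intf{\phantom{v},\hFer}$ and all intersection numbers, it acts on the $384$ very ample vectors; I would verify computationally that this action is transitive, so that up to the action of $\Aut(\XFer)$ and $\Gal(\Q(\zeta)/\Q)$ there is a single $\Xfs$-polarization of relative degree~$6$. Because $G_{48}$ is the image of $\Aut(\XFer)$ and acts with two orbits while the extra $\Gal(\Q(\zeta)/\Q)$-symmetry fuses them, the statement ``up to $\Aut(\XFer)$ and $\Gal(\Q(\zeta)/\Q)$'' is exactly the right formulation.

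The main obstacle is not conceptual but computational: condition~(b) and condition~(e) of Theorem~\ref{thm:pol} require, for each of the $60$ orbit representatives, an exhaustive enumeration of $(-2)$-vectors $r$ with prescribed inner products against $\hFer$ and $h$ via Algorithm~\ref{algo:Sep}, and these searches reduce to short-vector problems in a rank-$20$ hyperbolic lattice. The honest difficulty is controlling the size of these searches and making the nef-ness test~(b) terminate efficiently; as the \emph{Remark} after Algorithm~\ref{algo:Sep} indicates, this is where lattice basis reduction is essential. Once the enumerations are carried out reliably, the remaining verifications (the line counts $|\FFF_h|=56$ and the transitivity of the $\tilGFer$-action) are routine finite checks, and the theorem follows.
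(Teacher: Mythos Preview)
Your proposal is correct and follows essentially the same computational route as the paper: enumerate $\HHH_6$ via Algorithm~\ref{algo:AffES}, sift the $60$ $G_{48}$-orbits through conditions~(a)--(e) of Theorem~\ref{thm:pol}, count lines on the surviving $384$ vectors, and then verify transitivity under $\tilGFer=\langle G_{48},\Gal(\Q(\zeta)/\Q)\rangle$. One tiny correction: condition~(e) is an equality constraint $\intf{r,h}=0$, so it is handled by Algorithm~\ref{algo:AffES} rather than Algorithm~\ref{algo:Sep}.
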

We describe  $\Xfs$-polarizations of relative degree $6$ geometrically.
\begin{definition}
An ordered list   $(\ell_1, \ell_2, m_1, m_2, m_3, m_4, n)$ of seven lines on $\XFer$ is called an \emph{$\Xfs$-configuration} if the following conditions are satisfied:
\begin{itemize}
\item $\{\ell_1, \ell_2\}\in o_4$, 
\item $\{\ell_1, m_1\} \in o_1$, $\{\ell_2, m_1\} \in o_1$, and  $\{\ell_1, m_i\} \in o_3$, $\{\ell_2, m_i\} \in o_3$ for $i=2,3,4$, 
 \item $\{m_{1}, m_{k}\} \in o_7$ for $k=2,3,4$,
 \item $\{m_{2}, m_{3}\} \in o_5$, $\{m_{2}, m_{4}\} \in o_8$, $\{m_{3}, m_{4}\} \in o_8$, and 
 \item 
 $\{\ell_1, n\}\in o_8$,  $\{\ell_2, n\}\in o_8$,  $\{m_{1}, n\}\in o_8$,  $\{m_{2}, n\}\in o_2$,  $\{m_{3}, n\}\in o_2$,  $\{m_{4}, n\}\in o_7$.
  \end{itemize}
 We make the list of $\Xfs$-configurations.
 It turns out that the number of $\Xfs$-configurations is $6144$. 
 Comparing this list with the list of $\Xfs$-polarizations  of relative degree $6$,
 we obtain the following theorem.
\end{definition}
  \begin{theorem}
 If  $(\ell_1, \ell_2, m_{1}, \dots, m_{4}, n)$ is an $\Xfs$-configuration,
 then the vector 
\begin{equation}\label{eq:h3hX56conf}
 h:=3\hFer- ([\ell_1]+ [\ell_2]+ [m_{1}]+ \cdots+[m_{4}])
 \end{equation}
 is an $\Xfs$-polarization  of relative degree $6$.
 \end{theorem}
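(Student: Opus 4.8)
The plan is to verify directly that $h$ satisfies the three defining properties of an $\Xfs$-polarization of relative degree $6$: that $\intf{h,h}=4$, that $\intf{h,\hFer}=6$, and that $h$ is very ample. Once these are in hand, I would not need to re-examine the number of lines on $X_h$ at all: the earlier theorem asserting that every very ample polarization of degree $4$ with relative degree $6$ is automatically an $\Xfs$-polarization does this for free, so the count of exactly $56$ lines comes along with no extra work.

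The two numerical identities are pure bookkeeping from Segre's orbit data. Writing $D:=[\ell_1]+[\ell_2]+[m_1]+\cdots+[m_4]$, so that $h=3\hFer-D$, I would expand
$$
\intf{h,\hFer}=3\intf{\hFer,\hFer}-\intf{D,\hFer},\qquad
\intf{h,h}=9\intf{\hFer,\hFer}-6\intf{D,\hFer}+\intf{D,D}.
$$
Here $\intf{\hFer,\hFer}=4$, and since each of the six lines lies in $\Lines_{48}$ one has $\intf{[\ell],\hFer}=1$ and $\intf{[\ell],[\ell]}=-2$; hence $\intf{D,\hFer}=6$. For $\intf{D,D}$ I would read off the pairwise intersection numbers from the defining conditions of an $\Xfs$-configuration: a pair lying in $o_1$ or $o_3\subset\PPP_i$ contributes $1$, while a pair in $o_4,o_5,o_7$ or $o_8\subset\PPP_d$ contributes $0$. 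Exactly eight of the $\binom{6}{2}=15$ pairs among $\ell_1,\ell_2,m_1,\dots,m_4$ are of the intersecting type, so $\intf{D,D}=6\cdot(-2)+2\cdot 8=4$. This gives $\intf{h,\hFer}=12-6=6$ and $\intf{h,h}=36-36+4=4$. Note that the seventh line $n$ plays no role here, since it does not appear in $D$.

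It remains to prove that $h$ is very ample, i.e.\ that conditions (a)--(e) of Theorem~\ref{thm:pol} hold. Condition (a) is immediate from $\intf{h,\hFer}=6>0$. For (b)--(e) I would reduce to a single case by exploiting symmetry. Since $\tilGFer$ fixes $\hFer$ and preserves the intersection form, it preserves each Segre orbit $o_i$ (the eight orbits are separated by the $\tilGFer$-invariant incidence data of Table~\ref{table:Ai} together with the distinction between $\PPP_i$ and $\PPP_d$), and therefore carries $\Xfs$-configurations to $\Xfs$-configurations; under $g\in\tilGFer$ the associated vector transforms as $h\mapsto 3\hFer-D^{g}$. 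Because conditions (a)--(e) are phrased purely in terms of $\hFer$, $h$, and $\intf{\phantom{a},\phantom{a}}$, all of which are respected by $g$, very ampleness is a $\tilGFer$-invariant property of $h$. I would then verify, by the same backtrack enumeration used to produce $\tilGFer$, that $\tilGFer$ acts transitively on the $6144$ $\Xfs$-configurations; since $|\tilGFer|=6144$ this action is in fact simply transitive. Consequently it suffices to check (b)--(e) for the single $h$ attached to one explicitly chosen $\Xfs$-configuration, which is a finite computation carried out with Algorithms~\ref{algo:AffES} and~\ref{algo:Sep}.

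The main obstacle is the very ampleness step. The arithmetic is routine and the final appeal to the preceding theorem is automatic, but conditions (b)--(e) each ask that an a priori infinite set of lattice vectors be empty, and there is no shortcut purely from the incidence pattern of the seven lines: the forbidden vectors range over all of $\SX$, not just the span of the configuration. The symmetry reduction is what makes this tractable, so the delicate points are (i) confirming that $\tilGFer$ genuinely preserves every clause in the definition of an $\Xfs$-configuration, and (ii) confirming transitivity of $\tilGFer$ on those configurations; with these two finite checks done, a single run of Algorithms~\ref{algo:AffES} and~\ref{algo:Sep} settles (b)--(e). An alternative, entirely computational route would bypass the symmetry argument and instead compare the explicit list of $6144$ images $3\hFer-D$ against the precomputed list of $384$ $\Xfs$-polarizations of relative degree $6$, checking membership directly.
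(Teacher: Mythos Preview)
Your argument is correct. The paper's own proof is precisely the ``alternative, entirely computational route'' you sketch at the end: it enumerates all $6144$ $\Xfs$-configurations, forms $h=3\hFer-D$ for each, and verifies by direct comparison that every such $h$ lies in the precomputed list of $384$ $\Xfs$-polarizations of relative degree $6$. Your primary route is more structured: you extract $\intf{h,h}=4$ and $\intf{h,\hFer}=6$ by hand from the orbit data (the paper never spells this out), and then reduce the very-ampleness verification to a single representative via the action of $\tilGFer$. This buys explanatory content---one sees directly why the seventh line $n$ is irrelevant to $h$ and why the particular pattern of intersecting versus disjoint pairs among $\ell_1,\ell_2,m_1,\dots,m_4$ forces the two numerical constraints---at the price of the two auxiliary finite checks you flag as (i) and (ii). One caveat on (i): invoking Table~\ref{table:Ai} to separate the $o_i$ is mildly circular, since those matrices are themselves indexed by the orbit labels; a cleaner justification is that $G_{48}$ is normal in $\tilGFer$ (the Galois group normalizes $\Aut(\XFer)$ because $\XFer$ is defined over $\Q$), so $\tilGFer$ permutes the $o_i$ among themselves, after which one checks that no two distinct $o_i$ are swapped. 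Both routes are valid; the paper's is faster to execute, yours is faster to explain.
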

Consider the  seven lines $\ell_1, \ell_2, m_{1}, \dots, m_{4}, n$ labelled by the tags
\begin{equation}\label{eq:seveneqs}
[ 2, [ 1, 1 ] ], \;\; [ 2, [ 5, 5 ] ], \;\; [ 2, [ 1, 5 ] ], \;\; [ 3, [ 1, 1 ] ], \;\; [ 3, [ 3, 3 ] ], \;\; [ 4, [ 1, 7 ] ] ,\;\; [3, [1, 3]],
\end{equation}
respectively.
These lines form an $\Xfs$-configuration, and the corresponding $\Xfs$-polarization $\hfs$ is given by
$$
\hfs=[ 1, 2, 1, 2, 0, 0, 0, 0, 0, -1, -1, 0, 0, 0, -1, 0, 1, 1, 1, 0 ].
$$
\begin{theorem}\label{thm:X56defeq}
Let $\Phi_{56}\colon  \XFer\inj  \P^3$ be the embedding induced by  $\hfs$,
and $\Xfs$ the image of  $\Phi_{56}$.
With a suitable choice of the homogeneous coordinates of   $\P^3$, 
the surface $\Xfs$  is defined by the equation
$\Psi=0$, where $\Psi$ is given in Theorem~\ref{thm:main}.
\end{theorem}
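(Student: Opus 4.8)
The plan is to realize the embedding $\Phi_{56}$ by explicit cubic forms on $\PP^3$ and then recover the defining quartic of the image by a single polynomial substitution. By~\eqref{eq:h3hX56conf} the polarization is $\hfs = 3\hFer-([\ell_1]+[\ell_2]+[m_1]+\cdots+[m_4])$, so $\LLL_{\hfs}$ is the restriction to $\XFer$ of $\OOO_{\PP^3}(3)$, twisted down along the six lines $\ell_1,\ell_2,m_1,\dots,m_4$ of~\eqref{eq:seveneqs}. First I would identify $H^0(\XFer,\LLL_{\hfs})$ with the space $V$ of homogeneous cubic forms in $x_1,\dots,x_4$ that vanish on all six of these lines. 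A nonzero cubic cannot be divisible by the Fermat quartic for degree reasons, so restriction of cubics to $\XFer$ is injective; since the $20$-dimensional space of cubics maps isomorphically onto $H^0(\LLL_{3\hFer})$ (both have dimension $20$) while $h^0(\LLL_{\hfs})=\tfrac12\intf{\hfs,\hfs}+2=4$, this shows $\dim V = 4$ and that $V$ is exactly the complete linear system $|\LLL_{\hfs}|$.

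Next I would exhibit an explicit basis $f_1,f_2,f_3,f_4$ of $V$. Writing out the six lines from their tags via $x_1+\zeta^\mu x_i=0$, $x_j+\zeta^\nu x_k=0$, the requirement that a cubic vanish on each line becomes a linear system over $\Q(\zeta)$ on its twenty coefficients, whose solution space is four-dimensional; a basis is the list of cubics $f_i$ recorded in Table~\ref{table:fs}. By Theorem~\ref{thm:pol} and the preceding classification, $\hfs$ is very ample, so the morphism $(f_1:f_2:f_3:f_4)$ is precisely the embedding $\Phi_{56}$ and $\Xfs$ is its image. Declaring $y_i=f_i$ is the ``suitable choice of homogeneous coordinates'' in the statement.

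It then remains to verify that $\Xfs$ is cut out by $\Psi=0$. For this I would substitute $y_i=f_i$ into the polynomial $\Psi$ of Theorem~\ref{thm:main} and check the identity
$$
\Psi(f_1,f_2,f_3,f_4) = (x_1^4+x_2^4+x_3^4+x_4^4)\cdot G
$$
for an explicit form $G$ of degree $8$ over $\Q(\zeta)$. Since $\Psi(f_1,\dots,f_4)$ is a degree-$12$ form lying in the ideal of $\XFer$, the image $\Xfs$ is contained in the quartic $V(\Psi)$; as both are irreducible surfaces of degree $\intf{\hfs,\hfs}=4$ in $\P^3$, they coincide. Up to this substitution, the argument is a finite linear-algebra computation over the cyclotomic field, carried out with the data already assembled.

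The genuinely delicate point is not the verification but the choice of basis. Among the many bases of $V$ — equivalently, among the $\PGL_4(\C)$-orbit of coordinate systems on the target $\P^3$ — one must isolate the particular $f_i$ for which the image quartic collapses into the compact, nearly symmetric shape $\Psi$ with the economical coefficients $A$ and $B$. I expect this normalization, namely recognizing the right linear combinations of cubics that reduce a generic quartic to the displayed form, to be the main obstacle; once the $f_i$ of Table~\ref{table:fs} are in hand, the identity above is a mechanical check.
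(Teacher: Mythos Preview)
Your proposal is correct and follows essentially the same route as the paper: identify $H^0(\XFer,\LLL_{\hfs})$ with the $4$-dimensional space of cubics vanishing on the six lines~\eqref{eq:seveneqs}, take the explicit basis $f_1,\dots,f_4$ of Table~\ref{table:fs}, and then recover the quartic relation by substitution modulo the Fermat equation. The only cosmetic difference is in the last step: the paper computes the full kernel of the map $\Sigma_4\to\bar\Gamma_{12}$ (a $290\times 35$ linear-algebra problem) and finds it is one-dimensional and spanned by $\Psi$, whereas you verify directly that $\Psi(f_1,\dots,f_4)$ lies in the Fermat ideal and then invoke the degree-$4$ irreducibility of the image---both are the same computation viewed from opposite ends.
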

%
%During the proof,
%we present an explicit description of the morphism $\Phi_{56}$.
%
\begin{proof}
Let $\Gamma_d$ be  the space of all homogeneous polynomials of degree $d$ in the variables $x_1, x_2, x_3, x_4$.
We have a natural identification  
$$
 \Gamma_3=H^0(\XFer, \LLL_{3 \hFer}).
 $$
 Since $\LLL_{\hfs}$ is isomorphic to $\LLL_{3 \hFer}(-\ell_1-\ell_2-m_{1}- \dots- m_{4})$ as an invertible sheaf, 
the space $H^0(\XFer, \LLL_{\hfs})$ is identified with the space of
homogeneous polynomials of degree $3$ in $x_1, x_2, x_3, x_4$ that vanish along 
each of the lines $\ell_1, \ell_2, m_{1}, \dots, m_{4}$.
Since we have explicit defining equations~\eqref{eq:seveneqs} of these lines,
we can confirm that $H^0(\XFer, \LLL_{\hfs})$ is of dimension $4$, and 
calculate a basis $f_1, f_2, f_3, f_4$ of $H^0(\XFer, \LLL_{\hfs})$ by elementary linear algebra.
 %\par
 We fix a basis of $H^0(\XFer, \LLL_{\hfs})$ as in Table~\ref{table:fs}.
% (This basis is chosen so that the defining equation of $\Xfs$ has a simple form.)
 %
 \begin{table}
 {\small
\begin{eqnarray*}
 f_1 &=& \left( 1+\zeta-{\zeta}^{3} \right) {x_{{1}}}^{3}+ \left( \zeta+{\zeta}^{2}+{\zeta}^{3} \right) {x_{{1}}}^{2}x_{{3}}+ \left( 1+\zeta \right) {x_{{1}}}^{2}x_{{4}}+ \left( -\zeta-{\zeta}^{2}-{\zeta}^{3} \right) x_{{1}}{x_{{2}}}^{2}+ \\
&&\left( -1-\zeta \right) x_{{1}}x_{{2}}x_{{3}}+ \left( \zeta+{\zeta}^{2} \right) x_{{1}}x_{{2}}x_{{4}}-x_{{1}}{x_{{3}}}^{2}+ \left( \zeta+{\zeta}^{2} \right) x_{{1}}x_{{3}}x_{{4}}-{\zeta}^{3}x_{{1}}{x_{{4}}}^{2}+\\
&& \left( 1-{\zeta}^{2}-{\zeta}^{3}\right) {x_{{2}}}^{2}x_{{3}}+ \left( -\zeta-{\zeta}^{2} \right) x_{{2}}{x_{{3}}}^{2}+ \left( {\zeta}^{2}+{\zeta}^{3} \right) x_{{2}}x_{{3}}x_{{4}}+{\zeta}^{2}{x_{{3}}}^{3}+x_{{3}}{x_{{4}}}^{2}\end{eqnarray*}
\begin{eqnarray*}
 f_2 &=& {x_{{1}}}^{3}-\zeta^2{x_{{1}}}^{2}x_{{3}}+ \left( -1+{\zeta}^{3} \right) {x_{{1}}}^{2}x_{{4}}-\zeta^2 x_{{1}}{x_{{2}}}^{2}+ \left( 1-{\zeta}^{3} \right) x_{{1}}x_{{2}}x_{{3}}+ \left( -1-\zeta \right) x_{{1}}x_{{2}}x_{{4}}+\\
 &&\left( 1+\zeta-{\zeta}^{3} \right) x_{{1}}{x_{{3}}}^{2}+ \left( -{\zeta}^{2}-{\zeta}^{3} \right) x_{{1}}x_{{3}}x_{{4}}+ \left( -1-\zeta-{\zeta}^{2} \right) x_{{1}}{x_{{4}}}^{2}+\zeta\,{x_{{2}}}^{2}x_{{3}}+\\
 &&\left( {\zeta}^{2}+{\zeta}^{3} \right) x_{{2}}{x_{{3}}}^{2}+ \left( 1-{\zeta}^{3} \right) x_{{2}}x_{{3}}x_{{4}}+ \left( \zeta+{\zeta}^{2}+{\zeta}^{3} \right) {x_{{3}}}^{3}+ \left( 1+\zeta-{\zeta}^{3} \right) x_{{3}}{x_{{4}}}^{2}
\end{eqnarray*}
\begin{eqnarray*}
  f_3 &=&\left( 1+\zeta+{\zeta}^{2} \right) {x_{{1}}}^{2}x_{{2}}+ \left( \zeta+{\zeta}^{2}+{\zeta}^{3} \right) {x_{{1}}}^{2}x_{{4}}+ \left( -1-\zeta\right) x_{{1}}x_{{2}}x_{{3}}+ \left( \zeta+{\zeta}^{2} \right) x_{{1}}x_{{2}}x_{{4}}+ \\
&&\left( -\zeta-{\zeta}^{2} \right) x_{{1}}x_{{3}}x_{{4}}+ \left( {\zeta}^{2}+{\zeta}^{3} \right) x_{{1}}{x_{{4}}}^{2}+ \left( 1-{\zeta}^{2}-{\zeta}^{3} \right) {x_{{2}}}^{3}+ \left( -\zeta-{\zeta}^{2} \right) {x_{{2}}}^{2}x_{{3}}+\\
&&\left( 1+\zeta+{\zeta}^{2} \right) {x_{{2}}}^{2}x_{{4}}+\zeta^2 x_{{2}}{x_{{3}}}^{2}+ \left( -{\zeta}^{2}-{\zeta}^{3} \right) x_{{2}}x_{{3}}x_{{4}}+{\zeta}^{3}x_{{2}}{x_{{4}}}^{2}+{\zeta}^{3}{x_{{3}}}^{2}x_{{4}}+\zeta\,{x_{{4}}}^{3}
  \end{eqnarray*}
\begin{eqnarray*}
   f_4 &=&-\zeta\,{x_{{1}}}^{2}x_{{2}}+{x_{{1}}}^{2}x_{{4}}+ \left( -1+{\zeta}^{3} \right) x_{{1}}x_{{2}}x_{{3}}+ \left( 1+\zeta \right) x_{{1}}x_{{2}}x_{{4}}+ \left( -{\zeta}^{2}-{\zeta}^{3} \right) x_{{1}}x_{{3}}x_{{4}}+ \\
&&\left( -1+{\zeta}^{3} \right) x_{{1}}{x_{{4}}}^{2}+{\zeta}^{3}{x_{{2}}}^{3}+ \left( -1-\zeta \right) {x_{{2}}}^{2}x_{{3}}+\zeta\,{x_{{2}}}^{2}x_{{4}}+ \left( -1-\zeta+{\zeta}^{3} \right) x_{{2}}{x_{{3}}}^{2}+\\
&&\left( 1-{\zeta}^{3} \right) x_{{2}}x_{{3}}x_{{4}}+ \left( -1+{\zeta}^{2}+{\zeta}^{3} \right) x_{{2}}{x_{{4}}}^{2}+ \left( 1-{\zeta}^{2}-{\zeta}^{3} \right) {x_{{3}}}^{2}x_{{4}}+ \left( -1-\zeta-{\zeta}^{2} \right) {x_{{4}}}^{3}
   \end{eqnarray*}
}
\caption{Basis of $H^0(\XFer, \LLL_{\hfs})$}\label{table:fs}
\end{table}
 Let $\bar\Gamma_{12}$ denote  the space of all homogeneous polynomials of degree $12$ in $x_1, x_2, x_3, x_4$
 such that the degree with respect to $x_1$ is $\le 3$.
 For $g\in \Gamma_{12}$,
 let $\rho(g)$ denote the remainder on 
the division by $x_1^4+x_2^4+x_3^4+x_4^4$ under the lex monomial ordering $x_1>x_2>x_3>x_4$.
Then we obtain  a surjective homomorphism 
 $$
\rho\colon   \Gamma_{12} \to \bar\Gamma_{12}.
 $$
 Therefore we have a natural identification  
$$
\bar\Gamma_{12}=H^0(\XFer, \LLL_{12 \hFer}).
 $$
 Let $\Sigma_4$ be the linear space of all homogeneous polynomials 
 of degree $4$ in the variables $y_1, y_2, y_3, y_4$.
 The substitution $y_i\mapsto f_i$ for $i=1, \dots, 4$
 gives rise to a linear homomorphism $\sigma\colon \Sigma_4\to \Gamma_{12}$.
 The linear homomorphism $\rho\circ\sigma$ is represented by a $290\times 35$ matrix.
 The kernel of $\rho\circ\sigma\colon  \Sigma_4\to \bar\Gamma_{12}$
 is of dimension  $1$ and is generated by the polynomial $\Psi\in \Sigma_4$.
   \end{proof}
We study the projective geometry of the surface $\Xfs$ more closely.
The set 
$$
\Lines_{56}:=\set{r\in \SX}{\intf{r, r}=-2, \;\; \intf{r, \hfs}=1}
$$
of classes of lines on $\Xfs$ can be easily calculated by Algorithm~\ref{algo:AffES}.
%We then calculate the $56\times 56$ matrix of the intersection numbers
%of these classes.
It turns out that $\Lines_{56}$ spans $\SX$, and hence
 the stabilizer subgroup 
$$
\tilGfs:=\set{g\in \OG(\SX)}{\hfs^g=\hfs}
$$
of $\hfs$ in  $ \OG(\SX)$ is naturally isomorphic to the group of permutations of $\Lines_{56}$ that preserve the intersection numbers.
We fix a list of  vectors $[\lambda_1], \dots, [\lambda_{20}]$ of $\FFF_{56}$ that form 
a basis of $\SX$,
and calculate $\tilGfs$ by the  standard backtrack program in the same way as the calculation of $\tilGFer$.
It turns out that $\tilGfs$  is  of order $128$.
We put
$$
\Aut(\Xfs):=\set{\gamma\in \PGL_4(\C)}{ \gamma(\Xfs)=\Xfs},
$$
and consider its image 
$$
G_{56}:=\Image(\Aut(\Xfs) \to \OG(\SX))
$$
by the natural representation.
Then $G_{56}$ is a subgroup of   $\tilGfs$ consisting of elements that satisfy the period-preserving condition~\eqref{eq:preserveomegacond}.
It turns out that $G_{56}$ is isomorphic to the group 
$$
(((\Z/4\Z \times \Z/2\Z) : \Z/2\Z) : \Z/2\Z) : \Z/2\Z
$$
of order $64$.
The action of  $\Aut(\Xfs)$ decomposes $\Lines_{56}$ into three orbits $O_8$, $O_{16}$, $O_{32}$ of size $8$, $16$, and $32$,
respectively.
We have
\begin{equation}\label{eq:h56lincomboflines}
\hfs=\frac{1}{8} \sum_{r\in O_{32}} r=\frac{1}{8} \left(2 \sum_{r\in O_{8}} r + \sum_{r\in O_{16}} r\right).
\end{equation}
The intersection $\Lines_{48} \cap \Lines_{56}$ consists of $30$ classes.
(In fact, in searching for a basis of $H^0(\XFer, \LLL_{\hfs})$ 
that gives a simple  defining equation of $\Xfs$, 
we have used  these $30$ lines as a clue.)
Since we know the defining equations of the $48$ lines on $\XFer$ and the morphism $\Phi_{56}\colon \XFer\isom \Xfs$
explicitly, 
we can easily compute 
the defining equations of these $30$ lines.
\par
We say that a finite set $\{\mu_1, \dots, \mu_N\}$ of lines in $\P^3$
\emph{has a unique common intersecting line}
if there exists a unique line in $\P^3$ that intersects all of $\mu_1, \dots, \mu_N$.
If we know the defining equations of $\mu_1, \dots, \mu_N$,
then we can determine whether 
$\{\mu_1, \dots, \mu_N\}$ has a unique common intersecting line,
and if it has,
we can calculate the defining equation of the common intersecting line.
Suppose that we know the defining equations of 
lines $\lambda\sprime_1, \dots, \lambda\sprime_N$ on $\Xfs$,
but we do not know the defining equation of a line $\lambda\sprime_{N+1}$ on $\Xfs$.
Since we have the set $\FFF_{56}$ of classes of lines on $\Xfs$,
we can make the subset $\{\lambda\sprime_{i_1}, \dots, \lambda\sprime_{i_k}\}$ of
$\{\lambda\sprime_1, \dots, \lambda\sprime_N\}$
consisting of lines that intersect $\lambda\sprime$.
If $\{\lambda\sprime_{i_1}, \dots, \lambda\sprime_{i_k}\}$ has a unique common intersecting line,
then we can calculate the defining equation of $\lambda\sprime_{N+1}$.
Starting from the $30$ lines,
we can calculate the defining equations of the remaining $26$ lines on $\Xfs$.
Since we know the permutation action of $G_{56}$ on $\Lines_{56}$,
we can calculate each element of $\Aut(\Xfs)$.
By these calculations, we obtain the following theorem.
\begin{theorem}
The subgroup $\Aut(\Xfs)$ of $\PGL_4(\C)$ is generated by the following elements of order $4$.
\begin{eqnarray*}
\gamma_1&=&\left[ \begin {array}{cccc} 1&{\zeta}^{2}&-\zeta+{\zeta}^{2}-{\zeta}^
{3}&1-\zeta+{\zeta}^{3}\\ \noalign{\medskip}-1+\zeta-{\zeta}^{3}&-
\zeta+{\zeta}^{2}-{\zeta}^{3}&-{\zeta}^{2}&1\\ \noalign{\medskip}-1&{
\zeta}^{2}&-\zeta+{\zeta}^{2}-{\zeta}^{3}&-1+\zeta-{\zeta}^{3}
\\ \noalign{\medskip}-1+\zeta-{\zeta}^{3}&\zeta-{\zeta}^{2}+{\zeta}^{3
}&{\zeta}^{2}&1\end {array} \right],\\
\gamma_2&=&\left[ \begin {array}{cccc} 1&{\zeta}^{2}&\zeta+{\zeta}^{2}-{\zeta}^{
3}&1-\zeta-{\zeta}^{3}\\ \noalign{\medskip}{\zeta}^{2}&1&1-\zeta-{
\zeta}^{3}&\zeta+{\zeta}^{2}-{\zeta}^{3}\\ \noalign{\medskip}\zeta+{
\zeta}^{2}-{\zeta}^{3}&1-\zeta-{\zeta}^{3}&-1&-{\zeta}^{2}
\\ \noalign{\medskip}1-\zeta-{\zeta}^{3}&\zeta+{\zeta}^{2}-{\zeta}^{3}
&-{\zeta}^{2}&-1\end {array} \right].
\end{eqnarray*}
The lines on $\Xfs$ are obtained from the following lines $\lambda\spar{8}, \lambda\spar{16}, \lambda\spar{32}$ by the action of  $\Aut(\Xfs)$,
where the size of the orbit of $\lambda\spar{n}$ are $n$.
\begin{eqnarray*}
\lambda\spar{8} &:& y_1+\zeta^2\,y_4=y_2-\zeta^2\,y_3=0, \\
\lambda\spar{16} &:& y_1+(-\zeta+\zeta^2-\zeta^3)\, y_4=y_2+(-\zeta+\zeta^2-\zeta^3) \,y_3=0, \\
\lambda\spar{32} &:& y_1=3 y_2+ (-1-\zeta-\zeta^3)  y_3+(-\zeta+\zeta^2+\zeta^3)\,y_4=0.
\end{eqnarray*}
\end{theorem}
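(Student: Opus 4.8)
The plan is to establish the claimed structure of $\Aut(\Xfs)$ and the three orbits of lines by combining the lattice-theoretic data already assembled with the explicit projective geometry of $\Xfs\subset\P^3$. By the discussion preceding the theorem, we already know that $G_{56}=\Image(\Aut(\Xfs)\to\OG(\SX))$ is the subgroup of $\tilGfs$ cut out by the period-preserving condition~\eqref{eq:preserveomegacond}, that it has order $64$ with the stated group-theoretic description, and that its permutation action on $\Lines_{56}$ decomposes the $56$ classes into orbits $O_8$, $O_{16}$, $O_{32}$. Since the natural representation $\Aut(\XFer)\to\OG(\SX)$ is injective and the same holds for $\Aut(\Xfs)$ (an automorphism acting trivially on $\SX$ and preserving the polarization $\hfs$ must be the identity by Torelli), it suffices to exhibit two concrete matrices $\gamma_1,\gamma_2\in\PGL_4(\C)$ preserving $\Xfs$ whose images in $\OG(\SX)$ generate $G_{56}$, and to identify one line in each orbit.

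\emph{First} I would use the reconstruction procedure spelled out in the paragraphs before the theorem to pin down defining equations of all $56$ lines: starting from the $30$ lines in $\Lines_{48}\cap\Lines_{56}$, whose equations we obtain by transporting the known equations of the Fermat lines through $\Phi_{56}$, I would propagate to the remaining $26$ lines using the ``unique common intersecting line'' criterion, reading off the incidence pattern from the intersection numbers encoded in $\Lines_{56}$. \emph{Next}, having the full line configuration with equations, I would realize each element of the abstract group $G_{56}\subset\OG(\SX)$ as a linear automorphism of $\P^3$: a lattice isometry fixing $\hfs$ permutes $\Lines_{56}$, hence permutes the actual lines, and since the lines span $\P^3$ this permutation determines at most one element of $\PGL_4$; one then checks that the candidate matrix genuinely preserves $\Xfs$ by verifying that $\Psi\circ\gamma$ is proportional to $\Psi$. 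Carrying this out for a generating pair yields $\gamma_1$ and $\gamma_2$, and a direct computation confirms each has order $4$ and that they generate a group of order $64$ isomorphic to $(((\Z/4\Z\times\Z/2\Z):\Z/2\Z):\Z/2\Z):\Z/2\Z$. \emph{Finally}, I would select representative lines $\lambda^{(8)},\lambda^{(16)},\lambda^{(32)}$ lying in the three orbits—checkable by computing their $\Aut(\Xfs)$-orbit sizes—and record their equations.

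\emph{The main obstacle} will be the bookkeeping in passing between the lattice-theoretic permutation action of $G_{56}$ on $\Lines_{56}$ and the honest linear action on $\P^3$: one must verify that each abstract isometry in $G_{56}$ is actually induced by a projective transformation (rather than merely permuting classes combinatorially), which is precisely where the period-preserving condition~\eqref{eq:preserveomegacond} together with the Torelli theorem~\cite{MR0284440} is needed, and then solve the resulting linear systems to recover the entries of $\gamma_1,\gamma_2$ over $\Q(\zeta)$. Because the lines span $\P^3$, each permutation of $\Lines_{56}$ arising from $G_{56}$ determines the projective matrix uniquely up to scalar, so the genuine content is confirming solvability and consistency; once the matrices are in hand, verifying $\gamma_i(\Xfs)=\Xfs$ via $\Psi\circ\gamma_i\propto\Psi$ and computing the orbit sizes of the chosen representatives are routine symbolic calculations, which we perform in GAP~\cite{gap}.
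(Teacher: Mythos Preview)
Your proposal is correct and follows essentially the same route as the paper: the paragraphs immediately preceding the theorem already carry out the lattice-theoretic computation of $G_{56}$ and its orbit decomposition, describe the reconstruction of all $56$ line equations from the $30$ in $\Lines_{48}\cap\Lines_{56}$ via the unique-common-intersecting-line procedure, and then state that from the known permutation action of $G_{56}$ on $\Lines_{56}$ one computes each element of $\Aut(\Xfs)$ as an explicit element of $\PGL_4(\C)$. Your outline reproduces this plan faithfully, including the use of Torelli for the injectivity of $\Aut(\Xfs)\to\OG(\SX)$ and the verification $\Psi\circ\gamma_i\propto\Psi$.
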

\begin{corollary}\label{cor:outside3}
Every line $\lambda$ on $\Xfs$  is defined by an equation $M_{\lambda}\,y=0$,
where $M_{\lambda}$ is a $2\times 4$ matrix in the row-reduced echelon form with components in $\Z[\zeta, 1/3]$,
and $y=[y_1, y_2, y_3, y_4]^{T}$.
\end{corollary}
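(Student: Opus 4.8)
The plan is to reduce the assertion to the explicit data furnished by the preceding theorem, namely the two generators $\gamma_1, \gamma_2$ of $\Aut(\Xfs)$ and the three orbit representatives $\lambda\spar{8}, \lambda\spar{16}, \lambda\spar{32}$. First I would record that the representatives already have the required shape: the defining matrices $M_{\lambda\spar{8}}$ and $M_{\lambda\spar{16}}$ are in row-reduced echelon form with entries in $\Z[\zeta]$, whereas for $\lambda\spar{32}$ one divides the second equation by $3$, so that $M_{\lambda\spar{32}}$ is in row-reduced echelon form with entries in $\Z[\zeta, 1/3]$. This is the only place where the prime $3$ enters, and it motivates the statement of the corollary.

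Next I would promote $\Aut(\Xfs)$ to a subgroup of $\PGL_4(\Z[\zeta, 1/3])$ in the strong sense that each element admits a matrix representative $G$ with both $G$ and $G\inv$ having entries in $\Z[\zeta, 1/3]$. As $\gamma_1, \gamma_2$ have entries in $\Z[\zeta]$, it suffices to check that $\det \gamma_1$ and $\det \gamma_2$ are units of $\Z[\zeta, 1/3]$, i.e.\ are divisible in $\Z[\zeta]$ only by the primes over $3$ (up to a unit of $\Z[\zeta]$): then $\gamma_i\inv=(\det\gamma_i)\inv \mathrm{adj}(\gamma_i)$ lies in $\GL_4(\Z[\zeta,1/3])$, and so does every word in $\gamma_1^{\pm1}, \gamma_2^{\pm1}$. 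Here I would use that $\Z[\zeta, 1/3]$ is a principal ideal domain, being a ring of fractions of $\Z[\zeta]$, which has class number $1$.

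With this in hand, for a line $\lambda=g(\lambda\spar{n})$ with $g\in\Aut(\Xfs)$ represented by such a $G$, the $2$-dimensional space of linear forms vanishing on $\lambda$ is the row span of $M_{\lambda\spar{n}}\,G\inv$, a $2\times 4$ matrix with entries in $\Z[\zeta, 1/3]$. It then remains to pass from such a matrix $N$ to its row-reduced echelon form without leaving $\Z[\zeta, 1/3]$, and this is the step I expect to be the main obstacle. The entries of the echelon form are the $2\times 2$ minors (Pl\"ucker coordinates) of $N$, all of which lie in $\Z[\zeta, 1/3]$, each divided by the single minor indexed by the pivot columns; the echelon form therefore stays in $\Z[\zeta, 1/3]$ exactly when that pivot minor is a unit of $\Z[\zeta, 1/3]$. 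This unit property does not follow formally from the earlier steps and must be verified line by line.

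I would finish by invoking the explicit list of all $56$ defining equations already produced in the text---obtained by propagating the $30$ visible lines over $\Xfs$ under the action of $\Aut(\Xfs)$---and checking directly that for each line the leading $2\times 2$ minor is a unit of $\Z[\zeta, 1/3]$, so that the canonical row-reduced echelon representative $M_\lambda$ has entries in $\Z[\zeta, 1/3]$. Since $\Aut(\Xfs)$ acts with only three orbits, this final verification can be organized orbit by orbit, tracking how the pivot minor transforms under $\wedge^2 G\inv$ along each orbit, which keeps the computation short.
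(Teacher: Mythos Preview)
Your proposal is correct, but it is considerably more elaborate than what the paper does. In the paper the corollary carries no separate proof: the preceding discussion already explains that the defining equations of all $56$ lines have been computed explicitly (first the $30$ lines in $\Lines_{48}\cap\Lines_{56}$ via $\Phi_{56}$, then the remaining $26$ by the common-intersecting-line procedure), and the theorem records the orbit representatives and the generators of $\Aut(\Xfs)$. The corollary is then a bare inspection of that list of $56$ matrices.

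Your attempt to argue conceptually via $\Aut(\Xfs)\subset\PGL_4(\Z[\zeta,1/3])$ is sound as far as it goes, and you correctly diagnose the obstruction: passing to row-reduced echelon form divides by the pivot $2\times 2$ minor, and there is no a priori reason this minor is a unit in $\Z[\zeta,1/3]$. Since you end up invoking the explicit list of $56$ equations to settle this, your argument collapses to the paper's one-line verification; the group-theoretic scaffolding (checking $\det\gamma_i\in\Z[\zeta,1/3]^\times$, tracking Pl\"ucker coordinates under $\wedge^2 G^{-1}$) buys you nothing beyond a pleasant explanation of why only the prime $3$ could intervene. If you want to keep the conceptual part, you could present it as motivation and then simply state that direct inspection of the computed list confirms the pivot minors are units; otherwise just cite the computed equations and be done.
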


%
 %
%Note that every line on $\Xfs$ is defined over $\Z[\zeta]$.
%
\section{Reductions of $\Xfs$ at primes of $\Z[\zeta]$}\label{sec:reduction}
\subsection{Buchberger algorithm and the reduction}
We need a slight enhancement of the Buchberger algorithm
to calculate Gr\"obner bases over $\kappa_P$ at all but finitely many  primes $P$ of $\Z[\zeta]$ simultaneously.
This method must have been used by many people without fanfare,
but we cannot find any appropriate references.
\par
We fix a monomial ordering on the set of monomials of variables $z_1, \dots, z_n$.
Let $F$ be a field.
We use the notation in Chapter 2 of~\cite{MR1417938}.
In particular, 
for a non-zero polynomial $f\in F[z_1, \dots, z_n]$,
let $\LC(f)\in F$ denote the leading coefficient,
and 
for $f, g\in F[z_1, \dots, z_n]$ and a  finite subset $H\subset F[z_1, \dots, z_n]$,
let $S(f, g)$ be the $S$-polynomial of $f$ and $g$,
and $\divbyH{f}{H}$ the remainder on the division  of $f$ by $H$.
\par
Suppose that $F$ is a number field, and 
let $R$ be the integer ring of $F$.
For a prime $P$ of $R$, 
let $R_P$ denote the localization of $R$ at $P$,
$R_P\sptimes$ the group of units of $R_P$,
and $\kappa_P$ the residue field of $R$ at $P$.
For a polynomial $f\in R_P[z_1, \dots, z_n]$, 
let $f\modP \in \kappa_P[z_1, \dots, z_n]$ denote the reduction of $f$ at $P$,
and for a subset $H$ of $R_P[z_1, \dots, z_n]$,
let $H\modP $ denote the set of reductions at $P$ of polynomials in $H$.
The following lemma follows immediately from the definition of 
the $S$-polynomial and the division algorithm.
\begin{lemma}\label{lem:SH}
Let $f$ and $g$ be polynomials in $R_P[z_1, \dots, z_n]$,
and $H$ a finite subset of $R_P[z_1, \dots, z_n] $.
We have $S(f, g)\in F[z_1, \dots, z_n]$  and $\divbyH{f}{H} \in  F[z_1, \dots, z_n]$.
\par
{\rm (1)} Suppose that   both of $\LC(f) $ and $\LC(g)$ belong to $R_P\sptimes$.
Then 
$S(f, g)$ belongs to $R_P[z_1, \dots, z_n]$,  and $S(f, g) \modP $ is equal to
the $S$-polynomial 
of the polynomials  $f\modP $ and $ g\modP $ in $ \kappa_P[z_1, \dots, z_n]$.
\par
{\rm (2)}
Suppose that  $\LC(h) \in R_P\sptimes$  for any $h\in H$.
Then 
$\divbyH{f}{H}$ belongs to $R_P[z_1, \dots, z_n]$,  and $\divbyH{f}{H} \modP $ is equal to
 the remainder on the division  of $f \modP  \in  \kappa_P[z_1, \dots, z_n]$ by  the subset  $H\modP $ of $ \kappa_P[z_1, \dots, z_n]$.
\end{lemma}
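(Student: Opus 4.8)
The plan is to treat the two assertions separately, both resting on the fact that $R_P$ is a local ring whose maximal ideal is $P R_P$. Consequently an element $c\in R_P$ lies in $R_P\sptimes$ if and only if $c\modP\neq 0$ in $\kappa_P$, and for a nonzero polynomial $h\in R_P[z_1,\dots,z_n]$ with $\LC(h)\in R_P\sptimes$ the reduction $h\modP$ has the same leading monomial as $h$, with $\LC(h\modP)=\LC(h)\modP$. The preliminary statements $S(f,g)\in F[z_1,\dots,z_n]$ and $\divbyH{f}{H}\in F[z_1,\dots,z_n]$ require no hypothesis, since $F$ is a field and the leading coefficient of any nonzero polynomial is invertible in $F$; in forming the $S$-polynomial and in running the division algorithm one only ever divides by leading coefficients of $f$, $g$, or of the elements of $H$.

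For part (1), I would argue by direct substitution into the definition of the $S$-polynomial. Writing $z^{\gamma}$ for the least common multiple of the leading monomials of $f$ and $g$, we have
\[
S(f,g)=\frac{z^{\gamma}}{\LC(f)\,\mathrm{LM}(f)}\,f\;-\;\frac{z^{\gamma}}{\LC(g)\,\mathrm{LM}(g)}\,g,
\]
and when $\LC(f),\LC(g)\in R_P\sptimes$ the scalars $1/\LC(f)$ and $1/\LC(g)$ lie in $R_P$, so $S(f,g)\in R_P[z_1,\dots,z_n]$. Since reduction $R_P[z_1,\dots,z_n]\to\kappa_P[z_1,\dots,z_n]$ is a ring homomorphism fixing each monomial, and since the leading monomials of $f$ and $g$, hence $z^{\gamma}$, are unchanged by reduction, applying $\modP$ to the displayed formula produces exactly the $S$-polynomial of $f\modP$ and $g\modP$.

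For part (2) the subtlety is that no hypothesis is placed on $\LC(f)$, so when $\LC(f)\in PR_P$ the leading monomial of $f$ drops under reduction and the division algorithm run on $f$ over $R_P$ is not in lockstep with the one run on $f\modP$ over $\kappa_P$. I would circumvent this by proving, by Noetherian induction on the leading monomial of the intermediate dividend $p$, the following: if $p\in R_P[z_1,\dots,z_n]$ and $r_p\in R_P[z_1,\dots,z_n]$ denotes the remainder produced by the division algorithm applied to $p$ and $H$, then $r_p\modP$ equals the remainder of dividing $p\modP$ by $H\modP$. That $p$ and $r_p$ remain in $R_P[z_1,\dots,z_n]$ is clear, since each reduction step only divides by some $\LC(h)\in R_P\sptimes$. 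The base case $p=0$ is trivial. For the inductive step one splits according to whether $c:=\LC(p)$ is a unit or lies in $PR_P$, and within each case according to whether some $\mathrm{LM}(h)$ with $h\in H$ divides $\mathrm{LM}(p)$. When $c\in R_P\sptimes$ the two algorithms agree on their first step, and the induction hypothesis applies to the successor dividend $p\sprime$, whose leading monomial is strictly smaller. When $c\in PR_P$, the single step taken over $R_P$ either subtracts a multiple of some $h$ by a coefficient in $PR_P$, or transfers the term $c\,\mathrm{LM}(p)$ to the remainder; in both cases the term involved vanishes modulo $P$, so $p\sprime\modP=p\modP$ and the reduced remainder is unchanged, while the leading monomial again drops, so the induction hypothesis again applies.

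The main obstacle is precisely this last case: one must verify that the steps of the $R_P$-algorithm that become invisible after reduction --- those triggered by a leading coefficient in $PR_P$ --- contribute nothing to $r_p\modP$, so that the reduced remainder computed over $R_P$ coincides with the remainder computed directly over $\kappa_P$. The remaining bookkeeping is routine once the induction is organized on the well-ordered set of leading monomials, using at each step only that reduction is a monomial-preserving ring homomorphism and that $\LC(h)\modP\neq 0$ for every $h\in H$.
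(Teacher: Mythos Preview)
Your proposal is correct. The paper itself offers no proof beyond the sentence ``follows immediately from the definition of the $S$-polynomial and the division algorithm,'' so your approach is the same in spirit---unwind the definitions---but you supply considerably more detail. In particular, the subtlety you isolate in part~(2), that intermediate dividends $p$ may have $\LC(p)\in PR_P$ so that the $R_P$-algorithm and the $\kappa_P$-algorithm are not step-for-step identical, is genuine; your Noetherian induction on $\mathrm{LM}(p)$, splitting on whether $\LC(p)$ is a unit and showing that the ``ghost'' steps (those with $\LC(p)\in PR_P$) contribute nothing modulo $P$ either to the running dividend or to the accumulated remainder, is exactly the right way to handle it. One small point worth making explicit in a final write-up: the division algorithm in Cox--Little--O'Shea uses an \emph{ordered} list of divisors, and the remainder depends on that ordering; your argument implicitly uses that the same ordering is used over $R_P$ and over $\kappa_P$, and that $\mathrm{LM}(h\modP)=\mathrm{LM}(h)$ for each $h\in H$, so that at a unit step the \emph{same} $h$ is selected on both sides.
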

Suppose that a finite set $\{f_1, \dots, f_s\}$ of non-zero polynomials in $R[z_1, \dots, z_n]$
is given.
Let $I_F$ be the ideal of $F[z_1, \dots, z_n]$ generated by $\{f_1, \dots, f_s\}$.
For a prime  $P$ of $R$,
let $I_{P}$ be the ideal of $\kappa_P[z_1, \dots, z_n]$ generated by $\{f_1, \dots, f_s\}\modP $.
A Gr\"obner basis $G$ of  $I_F$  is calculated by the Buchberger algorithm.
We initialize $G:=\{f_1, \dots, f_s\}$.
If  $\divbyH{S(f_i, f_j)}{G}$  is non-zero for a pair of $f_i, f_j\in G$, 
we add $ f_t:=\divbyH{S(f_i, f_j)}{G}$ to $G$.
We continue this process until 
 no new non-zero polynomials $\divbyH{S(f_i, f_j)}{G}$ are obtained.
%(See Theorem 11, Section 9, Chapter 2 of~\cite{MR1417938}.)
%
\par
We introduce a variable set $C$ in the Buchberger algorithm.
We initialize 
$$
C:=\set{\LC(f_i)}{ i=1, \dots, s},
$$
and, whenever a new non-zero polynomial $f_t=\divbyH{S(f_i, f_j)}{G}$ is added to $G$,
we add $\LC(f_t)$ to $C$.
From Lemma~\ref{lem:SH}, we obtain the following proposition.
\begin{proposition}
Let $P$ be a prime of $R$.
Suppose that,  when the algorithm terminates, 
we have $C\subset R_P\sptimes$.
Then we have $G\subset R_P[z_1, \dots, z_n]$, and $G\modP $ is a Gr\"obner basis of the ideal $I_{P}$ of $\kappa_P[z_1, \dots, z_n]$.
\end{proposition}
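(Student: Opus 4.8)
The plan is to derive this proposition as a direct consequence of Lemma~\ref{lem:SH}, by tracking the behaviour of the Buchberger algorithm step-by-step under reduction at $P$. The statement asserts two things under the hypothesis $C\subset R_P\sptimes$: first, that the entire Gr\"obner basis $G$ produced by the algorithm lies in $R_P[z_1,\dots,z_n]$, and second, that its reduction $G\modP$ is a Gr\"obner basis of $I_P$. I would handle both by an induction on the sequence of polynomials added to $G$ during the run of the algorithm.

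First I would set up the induction. The algorithm builds $G$ as an ordered sequence, starting from $G_0:=\{f_1,\dots,f_s\}$ and appending one new polynomial $f_t=\divbyH{S(f_i,f_j)}{G}$ at each stage. The key observation is that the variable set $C$ collects precisely the leading coefficients $\LC$ of every polynomial that ever enters $G$: the initial leading coefficients $\LC(f_i)$ and each $\LC(f_t)$ of a newly added remainder. Hence the hypothesis $C\subset R_P\sptimes$ means exactly that \emph{every} element $h\in G$ at \emph{every} stage satisfies $\LC(h)\in R_P\sptimes$. With this reformulation, the inductive claim is: at each stage, $G\subset R_P[z_1,\dots,z_n]$, and the computation of the next $S$-polynomial and its remainder commutes with reduction at $P$. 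The base case is immediate since $f_1,\dots,f_s\in R[z_1,\dots,z_n]\subset R_P[z_1,\dots,z_n]$. For the inductive step, assuming $G\subset R_P[z_1,\dots,z_n]$ with all leading coefficients in $R_P\sptimes$, I apply Lemma~\ref{lem:SH}(1) to get $S(f_i,f_j)\in R_P[z_1,\dots,z_n]$ with reduction equal to the $S$-polynomial of $f_i\modP$ and $f_j\modP$, and then Lemma~\ref{lem:SH}(2) to get $\divbyH{S(f_i,f_j)}{G}\in R_P[z_1,\dots,z_n]$ with reduction equal to the corresponding remainder computed in $\kappa_P[z_1,\dots,z_n]$. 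Thus the newly added $f_t$ lies in $R_P[z_1,\dots,z_n]$, closing the induction and establishing $G\subset R_P[z_1,\dots,z_n]$.

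The second assertion then requires showing that $G\modP$ is itself a Gr\"obner basis of $I_P$, which I would obtain via Buchberger's criterion. The point is that the reduction map intertwines the rational computation with the one over $\kappa_P$: by the inductive commutation just established, for every pair the quantity $\divbyH{S(f_i\modP,f_j\modP)}{G\modP}$ equals $\bigl(\divbyH{S(f_i,f_j)}{G}\bigr)\modP$. When the algorithm over $F$ terminates, every $\divbyH{S(f_i,f_j)}{G}$ reduces to zero modulo $G$; hence each corresponding remainder over $\kappa_P$ is the reduction of zero, namely zero. By Buchberger's criterion, $G\modP$ is a Gr\"obner basis of the ideal it generates in $\kappa_P[z_1,\dots,z_n]$. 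Finally I would identify that ideal: since $G$ generates $I_F$ and arises from $\{f_1,\dots,f_s\}$ by $S$-polynomial/remainder operations, the set $G\modP$ generates the same ideal as $\{f_1,\dots,f_s\}\modP$, which is $I_P$ by definition.

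The main obstacle I anticipate is not in any single deduction but in the bookkeeping of the commutation property under the hypothesis on $C$: one must verify carefully that the units condition genuinely propagates through \emph{all} intermediate $G$'s and not merely the final one, so that both parts of Lemma~\ref{lem:SH} are applicable at every step. This hinges on the design choice that $C$ accumulates leading coefficients as they appear rather than only recording the terminal basis. A subtler point worth stating explicitly is that Buchberger's criterion presupposes $G\modP$ generates a well-defined ideal with leading terms matching those computed over $F$; because each $\LC(h)$ is a $P$-adic unit, the leading monomial of $h\modP$ coincides with the reduction of the leading monomial of $h$, so the leading-term structure is preserved and the $S$-polynomial reductions over $\kappa_P$ really do mirror those over $F$. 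Once that alignment of leading terms is noted, the criterion applies without further friction.
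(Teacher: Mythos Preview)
Your proposal is correct and takes essentially the same approach as the paper: the paper's own proof is the single sentence ``From Lemma~\ref{lem:SH}, we obtain the following proposition,'' and your argument is precisely the natural unpacking of that sentence---an induction along the steps of the Buchberger algorithm using both parts of Lemma~\ref{lem:SH}, followed by Buchberger's criterion over $\kappa_P$. There is nothing to add or correct.
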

Since $C$ is a finite set,
we can calculate a finite set $S$ of  prime integers  such that $G\modP $ is  a Gr\"obner basis of $I_{P}$ for any  prime $P$ over $p\notin S$.
More precisely,
for $\alpha\in F$,
let $d(\alpha)$ denote the least positive integer such that $d(\alpha)\alpha \in R$,
and let $n(\alpha)\in \Z$ be the norm of $d(\alpha)\alpha\in R$ over $\Z$.
Let $\tilde{C}$
denote the subset
\begin{equation}\label{eq:nds}
\shortset{d(\alpha)}{\alpha\in C}\cup\shortset{n(\alpha)}{\alpha\in C}
\end{equation}
of $\Z\setminus\{0\}$.
For a finite set $T$ of non-zero integers,
let $\PPP(T)$ denote the set of prime integers that divide at least one element of $T$.
Then $C\subset R_P\sptimes$ holds  for any prime $P$ over $p\notin \PPP(\tilde{C})$.
\par
In fact, 
this naive method often fails to  work in practice,
because some elements of $\tilde{C}$ can be so large
that we cannot calculate their prime factors.
(For example,  in the proof of Theorem~\ref{thm:redX56} below,
this method led us to a factorization of  a composite  integer $>10^{80}$,
which was impossible.)
%However,
%when we use Gr\"obner bases to prove a property of ideals that
%does not depend on the choice of monomial ordering,
%(for example, the property that the ideal  contains a non-zero constant),
To overcome this difficulty, 
we use the following trick. 
Let $T_1, \dots, T_N$ be finite  sets of non-zero integers.
Then we have
$$
\PPP(T_1)\cap\dots \cap  \PPP(T_N)=\PPP(\gcds(T_1, \dots, T_N)),
$$
where
$$
\gcds(T_1, \dots, T_N):=\set{\gcd(t_1, \dots, t_N)}{t_1\in T_1, \dots,  t_N\in T_N}.
$$
Since the calculation of the greatest common divisor  of large  integers is much easier than the calculation of prime factors
of these integers,
we often  manage  to  calculate $\PPP(T_1)\cap\dots \cap  \PPP(T_N)$
even when the calculation of $\PPP(T_i)$ is intractable.
\par
For example, 
suppose that $I_F$ contains $1$, and 
let us calculate a finite set $S$ of prime integers such that 
$1\in I_P$  holds for any prime $P$ of $R$ over $p\notin S$.
We carry out  the Buchberger algorithm 
several times under various choices of monomial ordering,
and obtain the sets $\tilde{C}_1, \dots, \tilde{C}_N$ of non-zero integers for these choices. 
Note that,  if $p\notin \PPP(\tilde{C}_{\nu})$  for at least one  $\nu$,
then $i \in I_P$ for any prime $P$ of $R$ over $p$.
Hence the intersection $S$ of these $\PPP(\tilde{C}_i)$ has the desired property.
\par
By means of  this method,
we write the following algorithms.
\begin{algorithm}\label{algo:reductionsmooth}
Let $V$ be a subscheme of  $\P^3$ defined by a homogeneous polynomial
$\psi\in R[z_1, \dots, z_4]$ such that 
 $V\tensor F$ is a smooth surface.
Then we can make a finite set $S$
of prime integers such that $V\tensor\kappa_P$
is  a smooth surface for any prime $P$ of $R$ over $p\notin S$.
Executing the Buchberger algorithm in the field $\kappa_P$ 
for the primes $P$ over $p\in S$,
we can make the complete set of primes $P$ 
such that $V\tensor \kappa_P$ is not a smooth surface.
\end{algorithm}
\begin{algorithm}\label{algo:reductionlines}
We say that a finite set $\{\mu_1, \dots, \mu_N\}$
of lines in $\P^3$ defined over a field  \emph{has no common intersecting lines}
if there exist no lines in $\P^3$ that intersect all of $\mu_1, \dots, \mu_N$.
Let  $\{\mu_1, \dots, \mu_N\}$ be a set of subschemes of $\P^3$  defined over $R$
such  that  $\{\mu_1\tensor F, \dots, \mu_N\tensor F\}$  is a set of distinct $N$ lines with  
no common intersecting lines.
We can make a complete set $B$ of primes of $R$
such that $\{\mu_1\tensor \kappa_P, \dots, \mu_N \tensor \kappa_P\}$ 
is a set of distinct $N$ lines that has  no  common intersecting lines for any $P\not\in B$.
\end{algorithm}
\subsection{Reductions of $\Xfs$}
Let $\XXXfs$ be the projective scheme over $\Z[\zeta]$ defined by the homogeneous equation $\Psi=0$ in $\P^3$.
The generic fiber $\XXXfs\tensor \Q(\zeta)$ is the surface $\Xfs$.
For a prime $P$ of $\Z[\zeta]$, 
let 
$\bar{\kappa}_P$ denote an algebraic closure of $\kappa_P$.
We  define  $\XfsP$ to be the pullback of $\XXXfs$ by $ \Z[\zeta]\to \bar\kappa_P$.
\par
Recall that $A= -1-2\zeta-2\zeta^3$ and $B=3+A$.
There exists only one  prime $P_2$ of $\Z[\zeta]$ over $2$,
and 
we have $A \bmod{P_2}=1$.
There exist exactly two primes $P_3$ and $P\sprime_3$ of $\Z[\zeta]$ over $3$,
for which  we have $A \bmod{P_3}=0$ and $A \bmod{P\sprime_3}=1$.
It is easy to see that 
 $\Xfs\ppar{P_2}$ and $\Xfs\ppar{P\sprime_3}$ are singular  at the point $(1:0:\sqrt{-1}:0)$.
%
%
%\par

%
\begin{proposition}\label{prop:char3}
The surface $\Xfs\ppar{P_3}$ is projectively isomorphic  over $\kappa_{P_3}\cong \F_9$ to
the Fermat quartic surface in characteristic $3$.
In particular, $\Xfs\ppar{P_3}$ is smooth,  and contains $112$ lines,
each of which is defined over $\kappa_{P_3}$.
\end{proposition}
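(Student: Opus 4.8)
The plan is to reduce the defining polynomial $\Psi$ modulo $P_3$ and then exhibit an explicit linear change of coordinates over $\kappa_{P_3}\cong\F_9$ carrying the reduced quartic to the Fermat quartic. First I would compute $\Psi\bmod P_3$. Since $A\bmod P_3=0$ by hypothesis, and since $3\in P_3$ forces $3\equiv 0$ in $\kappa_{P_3}$, the element $B=3+A$ also satisfies $B\bmod P_3=0$. Hence the entire factor $(y_1y_4+y_2y_3)\ppar{A(y_1y_3+y_2y_4)+B(y_1y_2-y_3y_4)}$ vanishes modulo $P_3$, and we are left with
\[
\Psi\bmod P_3 = y_1^3y_2+y_1y_2^3+y_3^3y_4+y_3y_4^3.
\]

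The heart of the argument is a characteristic-$3$ Waring-type identity. In characteristic $3$ one has $\binom{4}{1}\equiv\binom{4}{3}\equiv 1$ and $\binom{4}{2}\equiv 0$, so a direct expansion gives
\[
(u-v)^4-(u+v)^4 = -2(u^3v+uv^3)=u^3v+uv^3.
\]
Because $\kappa_{P_3}^\times\cong\F_9^\times$ is cyclic of order $8$, the fourth-power map has image $\{\pm 1\}$, so there exists $\iota\in\F_9$ with $\iota^4=-1$, whence $-(u+v)^4=(\iota(u+v))^4$. Setting $z_1=y_1-y_2$, $z_2=\iota(y_1+y_2)$, $z_3=y_3-y_4$, $z_4=\iota(y_3+y_4)$ — an $\F_9$-linear transformation whose matrix is block-diagonal with invertible $2\times 2$ blocks of determinant $2\iota\neq 0$ — I would verify that $z_1^4+z_2^4=y_1^3y_2+y_1y_2^3$ and $z_3^4+z_4^4=y_3^3y_4+y_3y_4^3$, so that $\Psi\bmod P_3$ becomes $z_1^4+z_2^4+z_3^4+z_4^4$. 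This exhibits the required projective isomorphism over $\F_9$.

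Finally, the ``in particular'' assertions follow from the known structure of the Fermat quartic in characteristic $3$. Its partial derivatives are the monomials $y_i^3$, which have no common zero in $\P^3$, so it is smooth; and the facts that it contains exactly $112$ lines, all defined over $\F_9$, are recorded in \cite{Mizukami} and in Section 6.1 of \cite{MR2653207}, already cited above. Transporting these along the explicit $\F_9$-isomorphism yields the statements for $\Xfs\ppar{P_3}$.

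The step I expect to require the most care is the identification itself: recognizing that the asymmetric reduced form $y_1^3y_2+y_1y_2^3+y_3^3y_4+y_3y_4^3$ is projectively the Fermat quartic, and — crucially — checking that the diagonalizing transformation can be taken over $\kappa_{P_3}=\F_9$ rather than merely over $\barFp$. This hinges on $-1$ being a fourth power in $\F_9$, which is exactly what guarantees that both the isomorphism and the $112$ lines descend to $\F_9$.
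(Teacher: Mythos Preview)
Your proof is correct and takes a genuinely different route from the paper's. After the same reduction $\Psi\bmod P_3 = y_1^3y_2+y_1y_2^3+y_3^3y_4+y_3y_4^3$, the paper observes that this is a nondegenerate \emph{Hermitian} form over $\F_9$ (cubing is the Frobenius of $\F_9$, so each block $y_i^3y_j+y_iy_j^3$ equals $\bar y_i y_j+y_i\bar y_j$), and then invokes Segre's theorem \cite{MR0213949} that any two nondegenerate Hermitian forms in the same number of variables over a finite field are projectively equivalent over that field; since $x_1^4+\cdots+x_4^4=\sum \bar x_i x_i$ is also Hermitian over $\F_9$, the isomorphism over $\kappa_{P_3}$ follows without writing it down. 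You instead produce the equivalence by hand via the characteristic-$3$ identity $(u-v)^4-(u+v)^4=u^3v+uv^3$, with the existence of $\iota\in\F_9$ satisfying $\iota^4=-1$ guaranteeing that the change of variables lives over $\F_9$ rather than only over $\bar\kappa_{P_3}$. Your approach is more elementary and self-contained, and gives the isomorphism explicitly; the paper's approach explains conceptually why such an equivalence must exist and why it is automatically defined over $\F_9$. One minor point: for the $112$ lines all defined over $\F_9$ the paper cites \cite{MR0213949} and \cite{MR3190354}; the references you name were invoked earlier in the paper for a different purpose (that lines generate $S_X$), though \cite{MR2653207} does also treat the line counts on Fermat surfaces.
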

\begin{proof}
The surface $\Xfs\ppar{P_3}$ is defined by 
$y_1^3 y_2+y_1 y_2^3+y_3^3 y_4+y_3 y_4^3=0$,
which is a non-degenerate  Hermitian form in $4$ variables over $\F_{9}$.
Hence $\Xfs\ppar{P_3}$ is projectively isomorphic to
the Fermat quartic surface in characteristic $3$ over $\F_9$
by~\cite{MR0213949}.
For the number of lines on this surface, see~\cite{MR0213949} or~\cite{MR3190354}.
\end{proof}
By Corollary~\ref{cor:outside3},
the lines on $\Xfs$ reduce to lines on $\XfsP$  for any prime $P$ over $p>3$.
\begin{theorem}\label{thm:redX56}
Suppose that $P$ is a prime of $\Z[\zeta]$ over a prime integer  $p>3$.
Then  $\XfsP$  is smooth,
and contains  exactly $56$ lines,
each of which is obtained by  the reduction of a line on $\Xfs$ at $P$.
Moreover, the isomorphism $\XFer\isom \Xfs$ given in Table~\ref{table:fs}
reduces at $P$ to an isomorphism  $\XFer\tensor\bar\kappa_P \isom \XfsP$.
\end{theorem}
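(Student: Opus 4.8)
The plan is to verify the three assertions of Theorem~\ref{thm:redX56} — smoothness of $\XfsP$, the count of $56$ lines, and the reduction of the isomorphism — by combining the lattice-theoretic results of Section~\ref{sec:X56} with the Gr\"obner-basis reduction machinery of Algorithms~\ref{algo:reductionsmooth}~and~\ref{algo:reductionlines}. First I would handle smoothness. The surface $\Xfs$ is defined by $\Psi\in\Z[\zeta][y_1,\dots,y_4]$, and $\Xfs\tensor\C$ is smooth by Theorem~\ref{thm:main}(1). Applying Algorithm~\ref{algo:reductionsmooth} to the singular locus ideal (generated by $\Psi$ and its partial derivatives) over the field $F=\Q(\zeta)$, I would extract a finite set $S$ of prime integers outside which $\XfsP$ is smooth, and then directly test each prime $P$ over $p\in S$. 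The expected outcome — already foreshadowed by the discussion of $P_2$ and $P_3'$ preceding Proposition~\ref{prop:char3} — is that the only bad reductions lie over $2$ and $3$, so that $\XfsP$ is smooth for every $P$ over $p>3$. The main subtlety here is purely computational: the integers in the set $\tilde C$ of~\eqref{eq:nds} can be astronomically large, which is precisely why the $\gcds$ trick must be invoked, running the Buchberger algorithm under several monomial orderings and intersecting the resulting $\PPP(\tilde C_\nu)$.

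Next I would count the lines. By Corollary~\ref{cor:outside3}, each of the $56$ lines on $\Xfs$ is defined by a $2\times4$ matrix with entries in $\Z[\zeta,1/3]$, so for $p>3$ these reduce to $56$ well-defined lines on $\XfsP$. Two things must be checked: that these $56$ reductions remain distinct lines actually lying on $\XfsP$, and that no new lines appear. The first is handled by reducing the defining matrices $M_\lambda$ modulo $P$ and confirming their row-reduced echelon forms stay distinct (again only finitely many primes can cause a collapse, detectable by examining the pivot denominators, which involve only $3$). For the second — that there are no lines beyond these $56$ — I would argue that the number of lines on a smooth quartic cannot increase under good reduction in a way incompatible with the Picard lattice, or more concretely exhibit, via Algorithm~\ref{algo:reductionlines}, that the configuration rigidly determines the lines. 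The cleanest route is to note that $\XfsP$ is isomorphic to $\XFer\tensor\bar\kappa_P$ (the third assertion), reducing the line count to the known fact that the Fermat quartic in characteristic $p>3$ carries exactly $48$ lines while the $56$-line model is obtained by the same polarization $\hfs$; the line count $56$ then transfers through the isomorphism once it is established.

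For the isomorphism I would reduce the explicit map of Table~\ref{table:fs}. The rational functions $f_i/f_j$ defining $\XFer\isom\Xfs$ have coefficients in $\Z[\zeta]$, and the inverse substitution $y_i\mapsto f_i$ together with the fact that $\Psi$ generates the one-dimensional kernel of $\rho\circ\sigma$ (as in the proof of Theorem~\ref{thm:X56defeq}) is an identity of polynomials over $\Z[\zeta]$. Reducing this polynomial identity modulo $P$ shows that the reduced substitution still maps into the ideal generated by $x_1^4+\cdots+x_4^4$ and lands on $\Psi\modP$. The content of the claim is that this reduced map is an \emph{isomorphism}, i.e.\ it is defined everywhere and has an everywhere-defined inverse. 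This requires that the indeterminacy loci of the map and its inverse do not meet $\XFer\tensor\bar\kappa_P$ and $\XfsP$ respectively; since both are complete intersections (Oguiso's remark in the introduction shows the graph is cut out by four bidegree-$(1,1)$ hypersurfaces), one checks that the relevant resultant/elimination ideals contain $1$ over $\Q(\zeta)$ and applies the $1\in I_P$ version of the method described after the Proposition.

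The hard part will be the last step: controlling the bad primes for the isomorphism. Whereas smoothness and the line configuration are governed by small primes that can be checked by hand, proving that the birational map of Table~\ref{table:fs} stays an isomorphism modulo every $P$ over $p>3$ means certifying that $1$ lies in several elimination ideals over all such $P$ simultaneously. This is exactly the situation where the naive set $\PPP(\tilde C)$ from~\eqref{eq:nds} forces the factorization of an integer exceeding $10^{80}$, as the authors warn; the $\gcds(T_1,\dots,T_N)$ device is indispensable here, and the genuine labor is in choosing enough monomial orderings that the intersection $\bigcap_\nu \PPP(\tilde C_\nu)$ collapses to $\{2,3\}$. Once that certification succeeds, all three assertions follow, and one concludes by reading off the line count $56$ and the field of definition $\kappa_P$ from the transported structure on $\XfsP$.
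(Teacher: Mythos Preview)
Your treatment of smoothness via Algorithm~\ref{algo:reductionsmooth} matches the paper. The isomorphism argument you sketch would probably also go through, though the paper handles it more cheaply (see below). The real gap is in the line count.

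Your ``cleanest route'' for ruling out extra lines on $\XfsP$ does not work. First, it is \emph{not} a known fact that the Fermat quartic in characteristic $p>3$ carries exactly $48$ lines: for supersingular primes the N\'eron--Severi lattice $\SXp$ jumps to rank $22$, and there is no a priori reason the Fermat model could not acquire new lines (indeed it does in characteristic~$3$). Second, even granting the isomorphism $\XFer\tensor\bar\kappa_P\cong\XfsP$, the number of lines on $\XfsP$ is governed by the set $\{r\in\SXp:\intf{r,r}=-2,\;\intf{r,\hfs(P)}=1\}$ in the \emph{enlarged} lattice $\SXp$, not in $\SX$; nothing you have said excludes the possibility of new $(-2)$-classes orthogonal to neither $\SX$ nor its complement. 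Your alternative (``the configuration rigidly determines the lines via Algorithm~\ref{algo:reductionlines}'') is the right instinct, but you have not said \emph{which} sets of lines to feed into that algorithm, and this is the whole content of the argument.

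What the paper actually does is a lattice computation that you are missing. Write $Q_P$ for the orthogonal complement of $\SX$ in $\SXp$ (rank $0$ or negative-definite of rank $2$), so that $\SXp\subset\SX\dual\oplus Q_P\dual$, and let $\pr_S\colon\SXp\to\SX\dual$ be the projection. One first shows $\hfs(P)=\hfs$ by observing that $\pr_S(\hfs(P))$ pairs to $1$ with every class in $\FFF_{56}$, which characterizes $\hfs$ in $\SX\tensor\Q$, and then that $\pr_Q(\hfs(P))=0$ because $Q_P$ is negative-definite. Now if $\nu$ is a putative extra line, one shows $\pr_S([\nu])$ must lie in the finite set
\[
\FFF'_{56}=\set{r'\in\SX\dual}{\intf{r',\hfs}=1,\;\intf{r',r'}\ge -2,\;\intf{r',[\lambda]}\in\{0,1\}\textrm{ for all }[\lambda]\in\FFF_{56}},
\]
computable by Algorithm~\ref{algo:AffES}. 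For each of the $56$ elements $r'\in\FFF'_{56}$, the set $\Lambda(r')$ of lines $\lambda$ on $\Xfs$ with $\intf{[\lambda],r'}=1$ is known by explicit equations over $\Z[\zeta,1/3]$, and $\nu$ would have to be a common transversal to $\Lambda(\pr_S([\nu]))\tensor\kappa_P$. Only now does Algorithm~\ref{algo:reductionlines} enter: one verifies that each $\Lambda(r')\tensor\kappa_P$ has no common intersecting line for $P$ over $p>3$, a contradiction. This is the step where the $\gcds$ trick is genuinely needed.

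Once $\hfs(P)=\hfs$ is in hand, the isomorphism is almost free: equation~\eqref{eq:h56lincomboflines} writes $8\hfs$ as an integral combination of line classes, so $\LLL_{\hfs}\tensor\bar\kappa_P$ has class $\hfs=\hfs(P)$ and is therefore the very ample bundle embedding $\XfsP\hookrightarrow\P^3$; it then suffices to check that $f_1,\dots,f_4$ remain linearly independent modulo $P$. Your elimination-ideal route is unnecessary here.
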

\begin{proof}
%Suppose that $P$ is a prime of $\Z[\zeta]$ over $p>3$.
The smoothness of  $\XfsP$ can be proved by Algorithm~\ref{algo:reductionsmooth}.
We show that  $\XfsP$ contains exactly $56$ lines,
and that they are obtained by the reduction of lines on $\Xfs$.
The fact that
the $56$ lines on $\Xfs$ reduce to distinct $56$ lines on $\XfsP$
keeping the intersection numbers
can be easily proved.
We will show that there exist no other lines on $\XfsP$.
Let $\SXp$ denote the N\'eron-Severi lattice of $\XfsP$.
Recall that  the set $\FFF_{56}$ of classes of the $56$ lines on $\Xfs$ 
spans $\SX$.
Hence the reduction of lines on $\Xfs$  
induces a natural embedding
$$
\SX\inj \SXp
$$
of lattices. 
From now on,
we regard $\SX$ as a sublattice of $\SXp$ by this embedding.
In particular, $\FFF_{56}$ is a subset of  the set of classes of lines on $\XfsP$.
It is enough to show that, if $\lambda$ is a line on $\XfsP$,
then its class $[\lambda]\in \SXp$ is in  $\FFF_{56}$.
Let $Q_P$ denote the orthogonal complement of $\SX$ in $\SXp$.
Then $Q_P$ is either of rank $0$ or negative-definite of rank $2$.
(See~\cite{MR2452829} for the problem when $Q_P$ is of rank $2$.)
We have
 $$
 \SX\oplus Q_P\;\;\subset\;\; \SXp\;\; \subset\;\; \SX\dual\oplus Q_P\dual.
 $$
We denote the projections by 
$$
\pr_{S}\colon \SXp\to \SX\dual,
\qquad
\pr_Q\colon \SXp\to Q_P\dual.
$$
Let $\hfs(P)\in \SXp$  denote the class   of a hyperplane section of $\XfsP\subset \P^3$.
Since $\FFF_{56}$ is perpendicular to $Q_P\dual$,
we have 
$\intf{\pr_S(\hfs(P)), r}=\intf{\hfs(P), r}=1$ for any $r\in \FFF_{56}$.
The class $\hfs\in \SX$ is characterized in $\SX\tensor\Q$ by the property that 
$\intf{\hfs, r}=1$ holds for any $r\in \FFF_{56}$.
Hence we have $\pr_S(\hfs(P))=\hfs$.
Since $\intf{\hfs(P), \hfs(P)}=\intf{\hfs, \hfs}=4$ and $Q_P$ is either $0$ or negative-definite,
we obtain $\pr_Q(\hfs(P))=0$, and therefore we have
$$
\hfs(P)=\hfs.
$$
Suppose that there exists a line  $\nu$  on $\XfsP$
such that $[\nu]\notin \FFF_{56}$.
Since $\hfs(P)=\hfs$ and $\hfs\perp Q_P$,  we have $\intf{\pr_S([\nu]), \hfs}=1$.
Since $\intf{\pr_Q([\nu]), \pr_Q([\nu])}\le 0$,  we have $\intf{\pr_S([\nu]), \pr_S([\nu])}\ge -2$.
For any $[\lambda]\in \FFF_{56}$, we have $\intf{[\nu], [\lambda]}\in \{0, 1\}$.
Since $Q_P\perp \FFF_{56}$, 
we have $\intf{\pr_S([\nu]), [\lambda]}\in \{0, 1\}$
for any $[\lambda]\in \FFF_{56}$.
Therefore $\pr_S([\nu])$ belongs to 
$$
 \FFF\sprime_{56}:=\set{r\sprime \in \SX\dual}{\intf{r\sprime, \hfs}=1, \;\intf{r\sprime,r\sprime}\ge -2, \; \intf{r\sprime, [\lambda]}\in \{0, 1\}
 \;\textrm{for any $[\lambda]\in \FFF_{56}$}}.
$$
We calculate $ \FFF\sprime_{56}$ by Algorithm~\ref{algo:AffES}. 
It turns out that $ \FFF\sprime_{56}$ consists of $56$ vectors.
For each $r\sprime\in \FFF\sprime_{56}$,
we calculate the set $\Lambda(r\sprime)=\{\lambda\sprime_1, \dots, \lambda\sprime_k\}$
of lines on $\Xfs$ such that
$$
\{[\lambda\sprime_1], \dots, [\lambda\sprime_k]\}=\set{[\lambda]\in \FFF_{56}}{\intf{[\lambda], r\sprime}=1}.
$$
Then $\nu$ is a common intersecting line of the set
$$
\Lambda(\pr_S([\nu]))\tensor\kappa_P=\{\lambda\sprime_1\tensor\kappa_P, \dots, \lambda\sprime_k\tensor\kappa_P\}
$$
of lines over $\kappa_P$.
On the other hand, since we know the defining equations over $\Z[\zeta, 1/3]$ of lines $\lambda\sprime_1, \dots, \lambda\sprime_k$,
 we can see by Algorithm~\ref{algo:reductionlines}
 that $\Lambda(r\sprime)\tensor\kappa_P$ has no  common intersecting lines for any $r\sprime\in \FFF\sprime_{56}$ if $P$ is over $p>3$.
 Thus we obtain a contradiction.
  (See Remark~\ref{rem:redP3} for what happens when $p=3$.)
\par
Next,  we investigate the reduction at $P$ of the isomorphism $\Phi_{56}\colon \XFer\isom \Xfs$ given in Table~\ref{table:fs}.
The equality~\eqref{eq:h56lincomboflines} implies that,  as an invertible sheaf, the line bundle $\LLL_{\hfs}^{\tensor 8}$ is isomorphic to $\OOO(D)$,
where $D$ is a linear combination of lines on $\Xfs$.
Hence the embedding $\SX\inj \SXp$ induced by the reduction of lines on $\Xfs$ 
maps the class $\hfs$ of $\LLL_{\hfs}$ to the class of 
the line bundle $\LLL_{\hfs}\tensor\bar\kappa_P$.
Since  $\hfs=\hfs(P)$,
the line bundle $\LLL_{\hfs}\tensor\bar\kappa_P$ is the very ample line bundle associated with the embedding $\XfsP\inj \P^3$.
We confirm that 
$$
f\modP  := (f_1 \modP , \dots, f_4\modP )
$$
 are linearly independent over $\kappa_P$.
Hence $f\modP $  form a basis of the space of the global sections of 
 $\LLL_{\hfs}\tensor\bar\kappa_P$.
%Since $\XfsP$ is smooth, the morphism defined by  $f\modP $ is  an isomorphism to $\XfsP$.
\end{proof}
\begin{remark}\label{rem:redP3}
We investigate the lines on $\Xfs (P_3)$,
where $P_3$ is the prime of $\Z[\zeta]$ 
in Proposition~\ref{prop:char3}.
For each line $\lambda$ on $\Xfs$, we have an invertible $2\times 2$ matrix $U_{\lambda}$ with components in $\Q(\zeta)$
such that $U_{\lambda} M_{\lambda}$  has components in the localization  $\Z[\zeta]_{P_3}$, 
and  that $\overline{M}\sprime_{\lambda}:=U_{\lambda} M_{\lambda} \bmod P_3$ is a matrix of row-reduced echelon form of rank  $2$.
Hence the  equation $\overline{M}\sprime_{\lambda}\,y=0$ defines a line on $\Xfs(P_3)$ defined over $\kappa_{P_3}$.
Using these equations, we can make the reduction  $\lambda\mapsto \lambda\tensor \kappa_{P_3}$ of  lines on $\Xfs$ to lines on $\Xfs(P_3)$.
Since this reduction keeps the intersection numbers,
it induces an embedding $\SX\inj S_{\Xfs(P_3)}$.
For each $r\sprime \in \FFF\sprime_{56}$,
the set $\Lambda(r\sprime)\tensor\kappa_{P_3}$ of lines over $\kappa_{P_3}$ has a unique common intersecting line,
and the common intersecting line is contained in $\Xfs(P_3)$.
Thus we obtain the $112=|\FFF_{56}|+|\FFF\sprime_{56}|$ lines on $\Xfs(P_3)$.
\end{remark}
\bibliographystyle{plain}

\def\cftil#1{\ifmmode\setbox7\hbox{$\accent"5E#1$}\else
  \setbox7\hbox{\accent"5E#1}\penalty 10000\relax\fi\raise 1\ht7
  \hbox{\lower1.15ex\hbox to 1\wd7{\hss\accent"7E\hss}}\penalty 10000
  \hskip-1\wd7\penalty 10000\box7} \def\cprime{$'$} \def\cprime{$'$}
  \def\cprime{$'$} \def\cprime{$'$}

\end{document}